\documentclass[11pt,reqno]{amsart}
\UseRawInputEncoding

\usepackage{latexsym,bm, amsfonts,amsthm,amsmath,mathrsfs}

\makeatletter
\newcommand{\rmnum}[1]{\romannumeral #1}
\newcommand{\Rmnum}[1]{\expandafter\@slowromancap\romannumeral #1@}
\makeatother

\setlength{\topmargin}{0cm} \setlength{\oddsidemargin}{0cm} \setlength{\evensidemargin}{0cm} \setlength{\textwidth}{15truecm}
\setlength{\textheight}{22.8truecm}

 \newtheorem{lem}{Lemma}[section]  \newtheorem{thm}{Theorem}[section]
\newtheorem{cor}{Corollary}[section] \newtheorem{defn}{Definition}[section] \newtheorem{rmk}{Remark}[section] 
\numberwithin{equation}{section}

 \newcommand{\me}{\mathrm{e}} 
\newcommand{\dif}{\mathrm{d}} \DeclareMathAlphabet{\mathsfsl}{OT1}{cmss}{m}{sl} \DeclareMathAlphabet{\mathpzc}{OT1}{pzc}{m}{it}

    \newcommand{\ee}{\mathbb{E}}

\newcommand{\nn}{\mathbb{N}}
\newcommand{\rr}{\mathbb{R}}
    
\newcommand{\vv}{\mathbb{V}}
 \def\CC{\mathcal C}   \def\FF{\mathcal F}  \def\HH{\mathcal H}

\def\d"{^{\prime\prime}} \def\d'{^{\prime}}

\begin{document}
\title[]{Equivalent conditions of complete $p$-th moment convergence for weighted sums of i. i. d. random variables under sublinear expectations}\thanks{Supported by Doctoral Scientific Research Starting Foundation of Jingdezhen Ceramic University (Nos.102/01003002031), Scientific Program of Department of Education of Jiangxi Province of China (Nos. GJJ190732, GJJ180737) and Natural Science Foundation Program of Jiangxi Province 20202BABL211005.}
\date{} \maketitle



\begin{center}
 Mingzhou  Xu~\footnote{Email: mingzhouxu@whu.edu.cn} \quad   Kun Cheng\footnote{Email: chengkun0010@126.com} \\
 School of Information Engineering, Jingdezhen Ceramic University,
  Jingdezhen 333403, P. R. China
\end{center}

 \renewcommand{\abstractname}{~}

{\bf Abstract}
We investigate the complete $p$-th moment convergence for weighted sums of independent, identically distributed random variables under sublinear expectations space. Using moment inequality and truncation methods, we prove the equivalent conditions of complete $p$-th moment convergence of weighted sums of independent, identically distributed random variables under sublinear expectations space, which complement the corresponding results obtained in Guo and Shan (2020).

{\bf Keywords}  Complete moment convergence; Complete convergence; I. i. d. random variables; Weighted sums; Sublinear expectation

 {\bf 2020 Mathematics Subject Classifications:} 60F15; 60F05
\vspace{-3mm}

\section{Introduction }
Peng \cite{Peng2007,Peng2010} presented the concept of  the sublinear expectation space to study the uncertainty of probability and distribution. The seminal work of Peng \cite{Peng2007,Peng2010} encourages people to study limit theorems under sublinear expectations space. Zhang \cite{Zhang2015,Zhang2016a,Zhang2016b} proved results including exponential inequalities, Rosenthal's inequalities, and Donsker's invariance principle under sublinear expectations. Wu \cite{Wu2020} obtained precise asymptotics for complete integral convergence.  Xu and Cheng \cite{Xu2021a} studied precise asymptotics in the law of iterated logarithm under sublinear expectations.  The interested reader could refer to Xu and Zhang \cite{Xujiapan2019,Xujiapan2020}, Chen \cite{Chen2016}, Gao and Xu \cite{Gao2011}, Fang et al. \cite{Fang12018},  Hu et al. \cite{Hufeng2014},  Hu and Yang \cite{Huzechun2017}, Huang and Wu \cite{Huang2019}, Kuczmaszewska \cite{Kuczm},  Ma and Wu \cite{Maxiaochen2020}, Wang and Wu \cite{Wangwenjuan2019}, Wu and Jiang \cite{Wuqunying2018}, Yu and Wu \cite{Yudonglin2018}, Zhang \cite{Zhang2016c}, Zhong and Wu \cite{Zhong2017} and references therein for more limit theorems under sublinear expectations.

Recently Guo and Shan \cite{Guo2020} studied equivalent conditions of complete $q$-th moment convergence for weighted sums of sequences of negatively orthant dependent random variables. Xu and Cheng \cite{Xu2021b} obtained equivalent conditions of complete convergence for weighted sums of sequences of i. i. d. random variables under sublinear expectations. Motivated by the work of Guo and Shan \cite{Guo2020},  Xu and Cheng \cite{Xu2021b}, here we try to prove the equivalent conditions of complete $p$-th moment convergence of weighted sums of independent, identically distributed random variables under sublinear expectations space, which complement the corresponding results obtained in Guo and Shan \cite{Guo2020}, also extend results in Xu and Cheng \cite{Xu2021b} from complete convergence to complete ~$p$-th monent convergence.

 We organized the rest of this paper as follows. In the next section, we give necessary basic notions, concepts and relevant properties, and present necessary lemmas under sublinear expectations. In Section 3, we give our main results, Theorems \ref{thm1}-\ref{thm4},  whose proofs are presented in Section 4.

\section{Preliminaries}
 As in Xu and Cheng \cite{Xu2021b}, we adopt similar notations as in the work by Peng \cite{Peng2010} and Chen \cite{Chen2016}. Suppose that $(\Omega,\FF)$ is a given measurable space. We assume that $\HH$ is a subset of all random variables on $(\Omega,\FF)$ such that $I_A\in \HH$ (cf. Chen \cite{Chen2016}), where $A\in\FF$, and  $X_1,\cdots,X_n\in \HH$ implies $\varphi(X_1,\cdots,X_n)\in \HH$ for each $\varphi\in \CC_{l,Lip}(\rr^n)$, where $\CC_{l,Lip}(\rr^n)$ denotes the linear space of (local lipschitz) function $\varphi$ satisfying
$$
|\varphi(\mathbf{x})-\varphi(\mathbf{y})|\le C(1+|\mathbf{x}|^m+|\mathbf{y}|^m)(|\mathbf{x}-\mathbf{y}|), \forall \mathbf{x},\mathbf{y}\in \rr^n
$$
for some $C>0$, $m\in \nn$ depending on $\varphi$.
\begin{defn}\label{defn1} A sublinear expectation $\ee$ on $\HH$ is a functional $\ee:\HH\mapsto \bar{\rr}:=[-\infty,\infty]$ satisfying the following properties: for all $X,Y\in \HH$, we have
\begin{description}
\item[\rm (a)]  Monotonicity: If $X\ge Y$, then $\ee[X]\ge \ee[Y]$;
\item[\rm (b)] Constant preserving: $\ee[c]=c$, $\forall c\in\rr$;
\item[\rm (c)] Positive homogeneity: $\ee[\lambda X]=\lambda\ee[X]$, $\forall \lambda\ge 0$;
\item[\rm (d)] Sub-additivity: $\ee[X+Y]\le \ee[X]+\ee[Y]$ whenever $\ee[X]+\ee[Y]$ is not of the form $\infty-\infty$ or $-\infty+\infty$.
 \end{description}

\end{defn}

A set function $V:\FF\mapsto[0,1]$ is said to be a capacity if it obeys
\begin{description}
\item[\rm (a)]$V(\emptyset)=0$, $V(\Omega)=1$;
\item[\rm (b)]$V(A)\le V(B)$, $A\subset B$, $A,B\in \FF$.\\
Moreover, if $V$ is continuous, then $V$ should satisfy
\item[\rm (c)] $V(A_n)\uparrow V(A)$, if $A_n\uparrow A$.
\item[\rm (d)] $V(A_n)\downarrow V(A)$, if $A_n\downarrow A$.
 \end{description}
A capacity $V$ is called sub-additive if $V(A+B)\le V(A)+V(B)$, $A,B\in \FF$.

In this paper, given a sublinear expectation space $(\Omega, \HH, \ee)$, set $\vv(A):=\inf\{\ee[\xi]:I_A\le \xi, \xi\in \HH\}=\ee[I_A]$, $\forall A\in \FF$ (see (2.3) and the definitions of $\vv$ above (2.3) in Zhang \cite{Zhang2016a}). Clearly $\vv$ is a sub-additive capacity. Denote the Choquet expectations $C_{\vv}$ by
$$
C_{\vv}(X):=\int_{0}^{\infty}\vv(X>x)\dif x +\int_{-\infty}^{0}(\vv(X>x)-1)\dif x.
$$

Suppose that $\mathbf{X}=(X_1,\cdots, X_m)$, $X_i\in\HH$ and $\mathbf{Y}=(Y_1,\cdots,Y_n)$, $Y_i\in \HH$  are two random vectors on  $(\Omega, \HH, \ee)$. $\mathbf{Y}$ is called to be independent of $\mathbf{X}$, if for each Borel-measurable function $\psi$ on $\rr^m\times \rr^n$ with $\psi(\mathbf{X},\mathbf{Y}), \psi(\mathbf{x},\mathbf{Y})\in \HH$ for each $\mathbf{x}\in\rr^m$, we have $\ee[\psi(\mathbf{X},\mathbf{Y})]=\ee[\ee\psi(\mathbf{x},\mathbf{Y})|_{\mathbf{x}=\mathbf{X}}]$ whenever $\bar{\psi}(\mathbf{x}):=\ee[|\psi(\mathbf{x},\mathbf{Y})|]<\infty$ for each $\mathbf{x}$ and $\ee[|\bar{\psi}(\mathbf{X})|]<\infty$ (see Definition 2.5 in Chen \cite{Chen2016} ). $\{X_n\}_{n=1}^{\infty}$ is called a sequence of independent random variables, if $X_{n+1}$ is independent of $(X_1,\cdots,X_n)$ for each $n\ge 1$.

Assume that $\mathbf{X}_1$ and $\mathbf{X}_2$ are two $n$-dimensional random vectors defined, respectively, in sublinear expectation spaces $(\Omega_1,\HH_1,\ee_1)$ and $(\Omega_2,\HH_2,\ee_2)$. They are called identically distributed if  for every Borel-measurable function $\psi$ such that $\psi(\mathbf{X}_1)\in \HH_1, \psi(\mathbf{X}_2)\in \HH_2$,
$$
\ee_1[\psi(\mathbf{X}_1)]=\ee_2[\psi(\mathbf{X}_2)], \mbox{  }
$$
whenever the sublinear expectations are finite. $\{X_n\}_{n=1}^{\infty}$ is called to be identically distributed if for each $i\ge 1$, $X_i$ and $X_1$ are identically distributed.

In the sequel we suppose that $\ee$ is countably sub-additive, i.e., $\ee(X)\le \sum_{n=1}^{\infty}\ee(X_n)$, whenever $X\le \sum_{n=1}^{\infty}X_n$, $X,X_n\in \HH$, and $X\ge 0$, $X_n\ge 0$, $n=1,2,\ldots$. Let $C$ denote a positive constant which may differ from line to line. $I(A)$ or $I_A$ represents the indicator function of $A$, $a_n\ll b_n$ means that there exists a constant $C>0$ such that $a_n\le C b_n$ for $n$ large sufficiently and $a_n\approx b_n$ means that $a_n\ll b_n$ and $b_n\ll a_n$. We use $\log x$ for $\ln\max\{\me, x\}$.

We first present several necessary lemmas to prove our main results. By using  Corollary 2.2, Theorem 2.3 in Zhang \cite{Zhang2016b}, the proofs of Theorem 3 in M\'{o}ricz \cite{Moricz1976} and Minkowski's inequality under sublinear expectations, we see that the following lemma holds.
\begin{lem}\label{lem1}[also cf. Lemma 2.3 in Xu and Cheng \cite{Xu2021b}]Suppose that $\{X_i,1\le i\le n\}$ is a sequence of independent random variables under sublinear expectation space $(\Omega,\HH,\ee)$ with $\ee[X_i]\le 0$, $\ee|X_i|^M<\infty$, $1\le i\le n$, $M\ge 2.$
Then
\begin{eqnarray*}
\ee\max_{1\le j\le n}\left|\sum_{i=1}^{j}X_i\right|^M\le C\log^M n\left(\sum_{i=1}^{n}\ee|X_i|^M+\left(\sum_{i=1}^{n}\ee|X_i|^2\right)^{M/2}\right),
 \end{eqnarray*}
 where $C$ depends on $M$ only.
\end{lem}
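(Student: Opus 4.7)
The plan is to combine a Rosenthal-type moment inequality on each sub-block with a dyadic maximal argument in the spirit of M\'oricz. First, for every $0\le a<b\le n$ the family $\{X_{a+1},\dots,X_b\}$ still consists of independent random variables and still satisfies the one-sided mean hypothesis $\ee[X_i]\le 0$ together with $\ee|X_i|^M<\infty$. I would therefore invoke Corollary 2.2 and Theorem 2.3 of Zhang (2016b) on this block to obtain the per-block Rosenthal estimate
\begin{equation*}
\ee\left|\sum_{i=a+1}^{b}X_i\right|^M\le C\,\Phi(a,b),\qquad \Phi(a,b):=\sum_{i=a+1}^{b}\ee|X_i|^M+\Bigl(\sum_{i=a+1}^{b}\ee|X_i|^2\Bigr)^{M/2}.
\end{equation*}

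Next, I would check that the majorant $\Phi(a,b)$ is superadditive in the interval, i.e.\ $\Phi(a,c)\ge \Phi(a,b)+\Phi(b,c)$ for all $a<b<c$. The first sum is literally additive, while for the second term I would use the elementary convexity inequality $(u+v)^{M/2}\ge u^{M/2}+v^{M/2}$, valid for $u,v\ge 0$ whenever $M/2\ge 1$, applied with $u=\sum_{i=a+1}^{b}\ee|X_i|^2$ and $v=\sum_{i=b+1}^{c}\ee|X_i|^2$.

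With the per-block bound and a superadditive majorant in hand, I would finally run the proof of Theorem~3 in M\'oricz (1976): decompose each index $j\le n$ into at most $\lceil\log_2 n\rceil$ dyadic blocks, control each block sum in $L^M(\ee)$ by Step~1, and reassemble the pieces using Minkowski's inequality under sublinear expectations,
\begin{equation*}
\bigl(\ee|Y+Z|^M\bigr)^{1/M}\le (\ee|Y|^M)^{1/M}+(\ee|Z|^M)^{1/M},
\end{equation*}
which itself follows from sub-additivity and positive homogeneity of $\ee$. The superadditivity of $\Phi$ then collapses the resulting telescoping sum to $\Phi(0,n)$, and the number $\lceil\log_2 n\rceil$ of dyadic levels raised to the $M$-th power produces the factor $(\log n)^M$, yielding the claimed
\begin{equation*}
\ee\max_{1\le j\le n}\Bigl|\sum_{i=1}^{j}X_i\Bigr|^M \le C(\log n)^M\Phi(0,n).
\end{equation*}

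The main obstacle I anticipate is the last step: one has to verify that M\'oricz's classical dyadic pairing survives verbatim in the sublinear setting, which means checking that every step in his argument is phrased purely in terms of monotonicity, Minkowski's inequality, and the superadditivity of $\Phi$, and never in terms of the linearity of the classical expectation. Steps one and two are, by contrast, essentially bookkeeping once Zhang's Rosenthal inequality is cited and the convexity of $t\mapsto t^{M/2}$ is recorded.
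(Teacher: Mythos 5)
Your proposal follows essentially the same route as the paper's proof: a per-block Rosenthal bound from Corollary 2.2 of Zhang (2016b), the superadditivity of the majorant $\Phi$ (the paper's inequality (2.2), which indeed rests on the convexity estimate $(u+v)^{M/2}\ge u^{M/2}+v^{M/2}$ you record), and M\'oricz's bisection scheme reassembled with Minkowski's inequality under sublinear expectations, with the recursion depth $\Lambda(n)\le\log(2n)$ supplying the $(\log n)^M$ factor. The obstacle you flag at the end is exactly the point the paper verifies --- that every step of M\'oricz's argument uses only monotonicity, Minkowski, and superadditivity, never linearity of the expectation --- so the proposal is correct.
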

\begin{proof}For readers' convenience, we give complete proofs here. First for $b\ge 0$, $n\ge 1$, set $S_{b,n}=\sum_{k=b+1}^{b+n}X_i$, $L_{b,n}=\max_{1\le k\le n}|S_{b,k}|$, $$g(F_{b,n})=C_M\left(\sum_{i=b+1}^{b+n}\ee|X_i|^M+\left(\sum_{i=b+1}^{b+n}\ee|X_i|^2\right)^{M/2}\right),$$
where $C_M$ depends on $M$ determined as in (2.8) of Corollary 2.2 (b)  in Zhang \cite{Zhang2016b}.
By Corollary 2.2 in Zhang \cite{Zhang2016b}, we know that for all $b\ge 0$, $n\ge 1$,
\begin{equation}\label{2.01}
\ee(|S_{b,n}|^M)\le g(F_{b,n}).
\end{equation}
Obviously, for $b\ge 0$, $1\le k\le k+l$,
\begin{equation}\label{2.02}
g(F_{b,k})+g(F_{b+k,l})\le g(F_{b,k+l}).
\end{equation}
Set $\Lambda(1)=1$, and for $n\ge 2$, $\Lambda(n)=1+\Lambda(m-1)$, where $m$ is the integer part of $\frac12(n+2)$. From $1+\log(2(m-1))\le \log(2n)$ follows that $\Lambda(n)\le \log(2n)$.
Now it is enough to prove that for $b\ge 0$, $n\ge 1$,
\begin{equation}\label{2.03}
\ee(L_{b,n}^M)\le (\Lambda(n))^Mg(F_{b,n}).
\end{equation}
 As in the proof of Theorem 4 of M\'{o}ricz \cite{Moricz1976}, Let $n>1$ be given. Obviously, $n=2m-1$ or $2m-2$. For $b\ge 0$, $m\le k\le n$, we see that
$$
|S_{b,k}|\le |S_{b,m}|+|S_{b+m,k-m}|,
$$
whence, for such $k$'s,
$$
|S_{b,k}|\le |S_{b,m}|+L_{b+m,n-m}.
$$
Since for $1\le k< m$, we have $|S_{b,k}|\le L_{b,m-1}$, hence, for $1\le k\le n$,
$$
|S_{b,k}|\le |S_{b,m}|+\left(L_{b,m-1}^M+L_{b+m,n-m}^M\right)^{1/M}.
$$
Thus
$$
L_{b,n}\le |S_{b,m}|+\left(L_{b,m-1}^M+L_{b+m,n-m}^M\right)^{1/M},
$$
and, by Minkowski's inequality under sublinear expectations (see (4.10) in  Proposition 4.2 of Chapter {\Rmnum1} of Peng \cite{Peng2010}),
\begin{equation}\label{2.04}
\left(\ee(L_{b,n}^M)\right)^{1/M}\le \left(\ee(|S_{b,m}^M|)\right)^{1/M}+\left(\ee(L_{b,m-1}^M)+\ee(L_{b+m,n-m}^M)\right)^{1/M}
\end{equation}
Suppose now that (\ref{2.03}) holds for $k<n$. Then by the choice of $m$, we see that
$$
\ee(L_{b,m-1}^M)\le \Lambda^M(m-1)g(F_{b,m-1}),
$$
and
$$
\ee(L_{b+m,n-m}^M)\le \Lambda^{M}(n-m)g(F_{b+m,n-m})\le \Lambda^{M}(m-1)g(F_{b+m,n-m}).
$$
By these two inequalities above and (\ref{2.02}), we see that
\begin{equation}\label{2.05}
\ee(L_{b,m-1}^M)+\ee(L_{b+m,n-m}^M)\le \Lambda^{M}(m-1)g(F_{b,m}).
\end{equation}
Finally (\ref{2.01}) implies
\begin{equation}\label{2.06}
\ee(|S_{b,m}|^M)\le g(F_{b,m})\le g(F_{b,n}).
\end{equation}
By (\ref{2.04})-(\ref{2.06}), we conclude that
$$
\left(\ee(L_{b,n}^M)\right)^{1/M}\le \left(1+\Lambda(m-1)\right)g^{1/M}(F_{b,n})=\Lambda(n)g^{1/M}(F_{b,n}),
$$
which implies the result. Hence, by (\ref{2.01}), the conclusion of (\ref{2.03}) is true for $n=1$. By induction (\ref{2.03}) holds for all $n=1,2,\ldots$. The proof is complete.

\end{proof}
\begin{lem}\label{lem2}[ See Lemma 2.4 in Xu and Cheng \cite{Xu2021b,Xu2021c}]Let $\{X_n;n\ge 1\}$ be a sequence of independent random variables under sublinear expectation space $(\Omega,\HH,\ee)$. Then for all $n\ge 1$ and $x>0$,
Then
\begin{eqnarray*}
&&\left[1-\vv\left(\max_{1\le j\le n}|X_j|>x\right)\right]^2\sum_{j=1}^{n}\vv(|X_j|>x)\le 4\vv\left(\max_{1\le j\le n}|X_j|>x\right).
 \end{eqnarray*}
\end{lem}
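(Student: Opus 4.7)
The plan is to establish this Ottaviani-type maximal inequality by a second-moment/Cauchy--Schwarz argument exploiting the multiplicativity of $\vv$ on intersections forced by Peng's independence. Write $A_{j}=\{|X_{j}|>x\}$, $A=\bigcup_{j=1}^{n}A_{j}$, $p_{j}=\vv(A_{j})$, $S=\sum_{j=1}^{n}p_{j}$, $M=\vv(A)$; the inequality becomes $(1-M)^{2}S\le 4M$, and we may assume $M<1$. Introduce the counting variable $T=\sum_{j=1}^{n}I_{A_{j}}$; since $T>0$ precisely on $A$, one has the identity $T=T\cdot I_{A}$, to which I would apply the sublinear Cauchy--Schwarz inequality (valid for non-negative random variables) to obtain
\[
\bigl(\ee[T]\bigr)^{2}=\bigl(\ee[T I_{A}]\bigr)^{2}\le \ee[T^{2}]\,\ee[I_{A}^{2}]=M\,\ee[T^{2}].
\]

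Next, I would bound $\ee[T^{2}]$ from above. Expanding $T^{2}=\sum_{j}I_{A_{j}}+2\sum_{i<j}I_{A_{i}}I_{A_{j}}$ and applying sub-additivity of $\ee$ reduces the task to bounding $\vv(A_{i}\cap A_{j})$ for $i<j$. Peng's independence of $X_{j}$ from $(X_{1},\ldots,X_{j-1})$, applied to the product of indicators $\psi(u,v)=I_{\{|u|>x\}}I_{\{|v|>x\}}$ (legitimate since indicators lie in $\HH$ by hypothesis), yields $\vv(A_{i}\cap A_{j})=p_{i}p_{j}$, whence $\ee[T^{2}]\le S+2\sum_{i<j}p_{i}p_{j}\le S+S^{2}$.

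The crux, and the principal obstacle, is obtaining a matching lower bound on $\ee[T]$. My plan is to use the first-passage decomposition $A=\bigsqcup_{j=1}^{n}B_{j}$ with $B_{j}=A_{j}\cap C_{j-1}$ and $C_{j-1}=\bigcap_{k<j}A_{k}^{c}$. A sequential application of Peng's independence delivers $\vv(B_{j})=p_{j}\,\vv(C_{j-1})$, and the sub-additive relation $1=\vv(\Omega)\le \vv(A)+\vv(A^{c})$ together with $C_{j-1}\supseteq A^{c}$ gives $\vv(C_{j-1})\ge 1-M$; summing, $\sum_{j}\vv(B_{j})\ge (1-M)S$. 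Combining this with the disjoint-union identity $I_{A}=\sum_{j}I_{B_{j}}$ and transferring the estimate to $\ee[T]$ via the conjugate expectation $-\ee[-\,\cdot\,]$ should produce a lower bound of the form $\ee[T]\ge c(1-M)S$ for some explicit constant $c$. Inserting this bound into Cauchy--Schwarz, $(\ee[T])^{2}\le M(S+S^{2})$, and splitting into the cases $S\le 1$ and $S>1$ then delivers $(1-M)^{2}S\le 4M$.

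The hardest step is exactly this lower bound on $\ee[T]$. Classically one has the identity $\ee[T]=S$, from which the sharper Ottaviani-type inequality $(1-M)S\le M$ follows immediately via the disjoint partition $\vv(A)=\sum_{j}\vv(B_{j})$; in the sublinear setting only the sub-additive bound $\ee[T]\le S$ is automatic, and the slack between $\vv(\bigcup_{j}B_{j})$ and $\sum_{j}\vv(B_{j})$ forces the factor $(1-M)$ to appear squared and produces the constant $4$ rather than $1$ in the final inequality.
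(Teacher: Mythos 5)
Your overall frame---Cauchy--Schwarz applied to $T=TI_A$, plus the bound $\ee[T^2]\le S+S^2$ obtained from sub-additivity and from $\vv(A_i\cap A_j)=p_ip_j$ via Peng's independence---is exactly the right skeleton and matches the argument behind the cited Lemma 2.4 of Xu and Cheng. But the step you yourself flag as the crux is where the proposal breaks down, and for two reasons. First, the transfer of $\sum_j\vv(B_j)\ge(1-M)S$ to a lower bound on $\ee[T]$ does not go through: from $T\ge I_A=\sum_jI_{B_j}$ and the conjugate expectation you only get $\ee[T]\ge-\ee[-\sum_jI_{B_j}]\ge\sum_j\bigl(-\ee[-I_{B_j}]\bigr)$, i.e.\ a sum of \emph{lower} capacities of the $B_j$, which need not be comparable to the upper capacities $\vv(B_j)=p_j\vv(C_{j-1})$ you computed; sub-additivity of $\ee$ over the disjoint union points in the wrong direction, so no bound of the form $\ee[T]\ge c(1-M)S$ emerges from this decomposition. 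Second, even if such a bound held with some $c<1$, the endgame does not close: Cauchy--Schwarz would give $c^2(1-M)^2S^2\le M(S+S^2)$, i.e.\ $S\bigl[c^2(1-M)^2-M\bigr]\le M$, which becomes vacuous as soon as $c^2(1-M)^2\le M$ and then places no restriction on $(1-M)^2S$ for large $S$.

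The repair is simpler than the machinery you propose, and it is precisely what the paper supplies in the Remark following the lemma: your premise that ``only $\ee[T]\le S$ is automatic'' in the sublinear setting is false for \emph{independent} variables. Iterating Peng's independence together with $\ee[c+Y]=c+\ee[Y]$ gives
\begin{equation*}
\ee\Bigl[\sum_{k=1}^{n}I(A_k)\Bigr]=\sum_{k=1}^{n}\ee[I(A_k)]=S,
\end{equation*}
i.e.\ genuine additivity, not merely sub-additivity (one peels off $I(A_n)$ by conditioning on $x=I(A_{n-1})$ and uses constant translation, then repeats). With $\ee[T]=S$ in hand, your Cauchy--Schwarz step yields $S^2\le M(S+S^2)$, hence $(1-M)S\le M$, which already implies $(1-M)^2S\le4M$ with room to spare; no first-passage decomposition and no case split on $S$ are needed.
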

\begin{rmk}
In the proofs of Lemma 2.4 in Xu and Cheng \cite{Xu2021b,Xu2021c}, by independence of $I(A_k):=I(|X_k|>x), k=1,\ldots, n$, $\ee(X+c)=\ee(c+X)=\ee(X)+c$ for constant $c$ and $X\in \HH$, Definition 2.5 in Chen \cite{Chen2016}, we see that
$$
 \begin{aligned}
&\quad\sum_{k=1}^{n}\ee[I(A_k)]\mbox{$=\sum_{k=1}^{n-2}\ee[I(A_k)]+\ee\left[I(A_{n-1})+\ee\left[I(A_n)\right]\right]$}\\
&=\sum_{k=1}^{n-2}\ee[I(A_k)]+\ee\left[\left[x+\ee\left[I(A_n)\right]\right]|_{x=I(A_{n-1})}\right]=\sum_{k=1}^{n-2}\ee[I(A_k)]+\ee[\ee[x+I(A_n)]|_{x=I(A_{n-1})}]\\
&=\sum_{k=1}^{n-2}\ee[I(A_k)]+\ee\left[I(A_{n-1})+I(A_n)\right]=\cdots=\ee\left[I(A_1)+\ee\left[\sum_{k=2}^{n}I(A_k)\right]\right]\\
&=\ee\left[\sum_{k=1}^{n}I(A_k)\right],
 \end{aligned}
$$
which implies that Lemma \ref{lem2} is valid. 
\end{rmk}
\begin{lem}\label{lem3}Let $X$ be a random variable under sublinear expectation space $(\Omega,\HH,\ee)$, $q>0$, $r>0$ and $p>0$. Then the following are equivalent:
\begin{description}
 \item[\rm (i)]\begin{eqnarray*}
\begin{cases} C_{\vv}\left(|X|^{p}\right)<\infty,& \text{
for $p>r/q$,}\\
C_{\vv}\left(|X|^{r/q}\ln(|X|)\right)<\infty, & \text{ for $p=r/q$,}\\
C_{\vv}\left(|X|^{r/q}\right)<\infty, & \text{ for $p<r/q$,}
\end{cases}
\end{eqnarray*}
\item[\rm (ii)]
\begin{equation*}
\int_{1}^{\infty}\dif y\int_{1}^{\infty}y^{r-1}\vv(|X|>x^{1/p}y^q)\dif x<\infty.
\end{equation*}
\end{description}
\end{lem}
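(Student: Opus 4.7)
The plan is to reduce the double tail integral in (ii) to a single tail integral of the form $\int_{1}^{\infty} h(u)\vv(|X|>u)\dif u$, evaluate $h$ by a case-split on the sign of $r-pq$, and then identify the result with one of the three Choquet expectations in (i) via the formula $C_{\vv}(\phi(|X|))=\int_{0}^{\infty}\phi'(s)\vv(|X|>s)\dif s$ (valid for any nondecreasing $\phi$ with $\phi(0)=0$). Since the integrand is nonnegative, every exchange of integration is justified by Tonelli's theorem, so there is nothing delicate about interchanging orders.

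The key computation is the following. In the inner integral fix $y\ge 1$ and substitute $u=x^{1/p}y^{q}$, so that $\dif x = p u^{p-1}y^{-pq}\dif u$ and the lower limit $x=1$ corresponds to $u=y^{q}$. This gives
\begin{equation*}
\int_{1}^{\infty}\dif y\int_{1}^{\infty}y^{r-1}\vv(|X|>x^{1/p}y^{q})\dif x = p\int_{1}^{\infty}\dif y\int_{y^{q}}^{\infty}y^{r-1-pq}u^{p-1}\vv(|X|>u)\dif u.
\end{equation*}
Exchanging the order of integration, the region $\{y\ge 1,\ u\ge y^{q}\}$ becomes $\{u\ge 1,\ 1\le y\le u^{1/q}\}$, so the above equals
\begin{equation*}
p\int_{1}^{\infty}u^{p-1}\vv(|X|>u)\left(\int_{1}^{u^{1/q}}y^{r-1-pq}\dif y\right)\dif u.
\end{equation*}

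Now I split into three cases, exactly matching the three cases in (i). If $p>r/q$ (equivalently $r<pq$), the bracketed integral is bounded above by $1/(pq-r)$ and converges to $1/(pq-r)$ as $u\to\infty$, so the whole expression is comparable to $\int_{1}^{\infty}u^{p-1}\vv(|X|>u)\dif u$, which by the Choquet identity applied to $\phi(x)=x^{p}$ is finite iff $C_{\vv}(|X|^{p})<\infty$. If $p<r/q$, the bracketed integral equals $(u^{(r-pq)/q}-1)/(r-pq)$, which is comparable to $u^{r/q-p}$ for large $u$, making the whole expression comparable to $\int_{1}^{\infty}u^{r/q-1}\vv(|X|>u)\dif u$, which via $\phi(x)=x^{r/q}$ is equivalent to $C_{\vv}(|X|^{r/q})<\infty$. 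If $p=r/q$, the bracketed integral equals $q^{-1}\log u$, so the whole expression is comparable to $\int_{1}^{\infty}u^{p-1}(\log u)\vv(|X|>u)\dif u$, and applying the Choquet identity to $\phi(x)=x^{r/q}\log x$ (using the paper's convention $\log x = \ln\max\{\me,x\}$) yields the equivalence with $C_{\vv}(|X|^{r/q}\log|X|)<\infty$.

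The main obstacle I expect is the borderline case $p=r/q$: one must check that the extra additive term $x^{r/q-1}$ appearing in $\phi'(x)$ when $\phi(x)=x^{r/q}\log x$ is dominated by $x^{r/q-1}\log x$ on $\{x\ge\me\}$, so that finiteness of $\int_{1}^{\infty}x^{r/q-1}(\log x)\vv(|X|>x)\dif x$ really is equivalent to $C_{\vv}(|X|^{r/q}\log|X|)<\infty$ and not merely one-sided. Everything else is a clean change-of-variables plus Tonelli argument, and the contributions from the region $\{|X|\le 1\}$ are harmless because they contribute at most a bounded amount to each Choquet expectation.
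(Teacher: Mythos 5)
Your proof is correct and takes essentially the same route as the paper's: the substitution $s=x^{1/p}y^{q}$, an exchange of the order of integration, and a three-way case split on the sign of $r-pq$ that reduces the double integral to a single weighted tail integral identified with the appropriate Choquet expectation. Your additional remarks on the borderline case $p=r/q$ and on the harmlessness of the region $\{|X|\le 1\}$ merely make explicit what the paper's $\approx$ chains leave implicit.
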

\begin{proof}
\begin{eqnarray*}
&&\int_{1}^{\infty}\dif y\int_{1}^{\infty}y^{r-1}\vv(|X|>x^{1/p}y^q)\dif x=\int_{1}^{\infty}\dif t\int_{t^q}^{\infty}t^{r-1}\vv(|X|>s)s^{p-1}t^{-qp}\dif s\\
&&\mbox{ ( Setting $s=x^{1/p}y^q$, $t=y$)}\\
&&\approx
\begin{cases} \int_{1}^{\infty}\vv(|X|>s)s^{p-1}\dif s\approx \int_{1}^{\infty}\vv(|X|>s)s^{p-1}\dif s\approx C_{\vv}\left(|X|^{p}\right),& \text{
for $p>r/q$,}\\
\int_{1}^{\infty}\vv(|X|>s)s^{r/q-1}\ln(s) \dif s\approx C_{\vv}\left(|X|^{r/q}\ln(|X|)\right), & \text{ for $p=r/q$,}\\
\int_{1}^{\infty}\vv(|X|>s)s^{p-1+\frac{r-pq}{q}} \dif s\approx C_{\vv}\left(|X|^{r/q}\right), & \text{ for $p<r/q$.}
\end{cases}
\end{eqnarray*}
The proof is finished.
\end{proof}
\begin{lem}\label{lem4}Let $X$ be a random variable under sublinear expectation space $(\Omega,\HH,\ee)$, $q>0$, $r>0$ and $p>0$. Then the following is equivalent:
\begin{description}
 \item[\rm (i)]\begin{eqnarray*}
\begin{cases} C_{\vv}\left(|X|^{p}\right)<\infty,& \text{
for $p>r/q$,}\\
C_{\vv}\left(|X|^{r/q}\ln^2|X|\right)<\infty, & \text{ for $p=r/q$,}\\
C_{\vv}\left(|X|^{r/q}\ln|X|\right)<\infty, & \text{ for $p<r/q$,}
\end{cases}
\end{eqnarray*}
\item[\rm (ii)]
\begin{equation*}
\int_{1}^{\infty}\dif y\int_{1}^{\infty}y^{r-1}\ln(y)\vv(|X|>x^{1/p}y^q)\dif x<\infty.
\end{equation*}
\end{description}
\end{lem}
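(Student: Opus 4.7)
The plan is to mirror the proof of Lemma \ref{lem3}, adjusting the inner integral to account for the extra $\ln(y)$ factor, which is precisely what promotes each logarithmic power in the conclusion by one. First I would perform the change of variables $s=x^{1/p}y^q$, $t=y$ (treating $y$ as the outer variable), so that $\dif x = p y^{-qp}s^{p-1}\dif s$. This turns the double integral in (ii) into
\begin{equation*}
\int_{1}^{\infty}\dif y\int_{1}^{\infty}y^{r-1}\ln(y)\vv(|X|>x^{1/p}y^q)\dif x\approx \int_{1}^{\infty}\dif y\int_{y^q}^{\infty}y^{r-1-qp}\ln(y)\vv(|X|>s)s^{p-1}\dif s.
\end{equation*}

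Next I would interchange the order of integration using Fubini's theorem (applied to the positive integrand $\vv(|X|>s)s^{p-1}y^{r-1-qp}\ln y$). Since $s\ge y^q$ is equivalent to $1\le y\le s^{1/q}$, I get
\begin{equation*}
\int_{1}^{\infty}\vv(|X|>s)s^{p-1}\left(\int_{1}^{s^{1/q}}y^{r-1-qp}\ln(y)\dif y\right)\dif s.
\end{equation*}
The inner integral is elementary and splits into three regimes. When $p<r/q$ (so $r-qp>0$) integration by parts gives $\int_1^{s^{1/q}} y^{r-1-qp}\ln(y)\dif y\approx s^{(r-qp)/q}\ln(s)$; when $p=r/q$ the integral equals $\tfrac{1}{2q^2}\ln^2(s)$; when $p>r/q$ the integral is bounded in $s$.

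Substituting each of these three estimates back and recognizing the resulting Choquet-type tail integrals gives
\begin{equation*}
\int_{1}^{\infty}\dif y\int_{1}^{\infty}y^{r-1}\ln(y)\vv(|X|>x^{1/p}y^q)\dif x\approx \begin{cases} \int_{1}^{\infty}\vv(|X|>s)s^{p-1}\dif s\approx C_{\vv}(|X|^p), & p>r/q,\\ \int_{1}^{\infty}\vv(|X|>s)s^{r/q-1}\ln^{2}(s)\dif s\approx C_{\vv}(|X|^{r/q}\ln^{2}|X|), & p=r/q,\\ \int_{1}^{\infty}\vv(|X|>s)s^{r/q-1}\ln(s)\dif s\approx C_{\vv}(|X|^{r/q}\ln|X|), & p<r/q,\end{cases}
\end{equation*}
which is exactly the equivalence claimed. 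The only genuinely non-routine step is the three-case asymptotic analysis of $\int_1^{s^{1/q}}y^{r-1-qp}\ln(y)\dif y$; the critical case $p=r/q$ is where the $\ln(y)$ factor in the hypothesis combines with a second $\ln$ arising from $\int_1^{s^{1/q}}y^{-1}\dif y$ to produce $\ln^2|X|$, and this is the main subtlety that distinguishes the proof from Lemma \ref{lem3}.
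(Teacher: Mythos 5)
Your proposal is correct and follows essentially the same route as the paper's own proof: the substitution $s=x^{1/p}y^q$, the interchange of the order of integration, and the three-case evaluation of $\int_{1}^{s^{1/q}}t^{r-1-qp}\ln(t)\,\dif t$ are exactly the steps used in the paper, which simply records them more tersely. Your explicit computation of the inner integral (in particular the $\tfrac{1}{2q^2}\ln^2(s)$ value in the critical case $p=r/q$) fills in the detail the paper leaves implicit.
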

\begin{proof}
\begin{eqnarray*}
&&\int_{1}^{\infty}\dif y\int_{1}^{\infty}y^{r-1}\ln(y)\vv(|X|>x^{1/p}y^q)\dif x\approx\int_{1}^{\infty}\dif t\int_{t^q}^{\infty}t^{r-1}\ln(t)\vv(|X|>s)s^{p-1}t^{-qp}\dif s\\
&&\mbox{ ( Setting $s=x^{1/p}y^q$, $t=y$)}\\
&&\approx \int_{1}^{\infty}\vv(|X|>s)s^{p-1}\dif s\int_{1}^{s^{1/q}}t^{r-1-qp}\ln(t)\dif t\\
&&\approx
\begin{cases} \int_{1}^{\infty}\vv(|X|>s)s^{p-1}\dif s\approx C_{\vv}\left(|X|^{p}\right),& \text{
for $p>r/q$,}\\
\int_{1}^{\infty}\vv(|X|>s)s^{r/q-1}\ln^2(s) \dif s\approx C_{\vv}\left(|X|^{r/q}\ln^2|X|\right), & \text{ for $p=r/q$,}\\
\int_{1}^{\infty}\vv(|X|>s)s^{p-1+\frac{r-pq}{q}}\ln(s) \dif s\approx C_{\vv}\left(|X|^{r/q}\ln|X|\right), & \text{ for $p<r/q$.}
\end{cases}
\end{eqnarray*}
This finishes the proof.
\end{proof}
\section{Main results}
We state our main results, the proofs of which will be given in Section 4.
\begin{thm}\label{thm1} Let $\{X_n,n\ge 1\}$ be a sequence of independent random variables, identically distributed as $X$ under sublinear expectation space  $(\Omega,\HH,\ee)$. Assume that $r>1$, $q>\frac12$, $\beta>-q/r$, and suppose that $\ee X=-\ee(-X)=0$ for $\frac12<q\le 1$. Suppose that $\{a_{ni}\approx (i/n)^{\beta}(1/n^q), 1\le i\le n, n\ge 1\}$ is a triangular array of real numbers. Then the following is equivalent:
\begin{description}
 \item[\rm (i)]\begin{eqnarray}\label{3.1}
\begin{cases} C_{\vv}\left(|X|^{p}\right)<\infty,& \text{
for $p>r/q$,}\\
C_{\vv}\left(|X|^{r/q}\ln|X|\right)<\infty, & \text{ for $p=r/q$,}\\
C_{\vv}\left(|X|^{r/q}\right)<\infty, & \text{ for $p<r/q$,}
\end{cases}
\end{eqnarray}
\item[\rm (ii)]
\begin{equation}\label{3.2}
\sum_{n=1}^{\infty}n^{r-2}C_{\vv}\left(\left(\max_{1\le k\le n}\left|\sum_{i=1}^{k}a_{ni}X_{i}\right|^p-\epsilon\right)^{+}\right)<\infty, \mbox{ $\forall \epsilon>0$.}
\end{equation}
\end{description}
\end{thm}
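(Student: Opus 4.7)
The strategy is to prove both implications after rewriting (\ref{3.2}) in an equivalent integral form. Since $(Z-\epsilon)^{+}\ge 0$, one has $C_{\vv}((Z-\epsilon)^{+})=\int_{\epsilon}^{\infty}\vv(Z>s)\dif s$, so taking $Z=\max_{1\le k\le n}|\sum_{i=1}^{k}a_{ni}X_{i}|^{p}$ and substituting $s=u^{p}$, condition (\ref{3.2}) is equivalent to
\[
J(\epsilon):=\sum_{n=1}^{\infty}n^{r-2}\int_{\epsilon^{1/p}}^{\infty}u^{p-1}\vv\!\Bigl(\max_{1\le k\le n}\Bigl|\sum_{i=1}^{k}a_{ni}X_{i}\Bigr|>u\Bigr)\dif u<\infty\qquad\text{for every }\epsilon>0.
\]
I shall work with $J(\epsilon)$ throughout.

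\textbf{Sufficiency.} For each $n$ and each $u\ge\epsilon^{1/p}$ I truncate at level $u$ by setting $Y_{ni}^{(u)}:=(-u)\vee(a_{ni}X_{i})\wedge u$ and $T_{k}^{(n,u)}:=\sum_{i=1}^{k}(Y_{ni}^{(u)}-\ee Y_{ni}^{(u)})$. Once $\sum_{i=1}^{n}|\ee Y_{ni}^{(u)}|\le u$ is verified, the identity $Y_{ni}^{(u)}=a_{ni}X_{i}$ on $\bigcap_{i}\{|a_{ni}X_{i}|\le u\}$ gives
\[
\Bigl\{\max_{k}\Bigl|\sum_{i=1}^{k}a_{ni}X_{i}\Bigr|>4u\Bigr\}\subset\bigcup_{i=1}^{n}\{|a_{ni}X_{i}|>u\}\cup\bigl\{\max_{k}|T_{k}^{(n,u)}|>2u\bigr\}.
\]
The second event is bounded by Markov's inequality followed by Lemma~\ref{lem1}, producing a factor $Cu^{-M}\log^{M}n(\sum_{i}\ee|Y_{ni}^{(u)}|^{M}+(\sum_{i}\ee|Y_{ni}^{(u)}|^{2})^{M/2})$. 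Choosing $M$ larger than $p$, $r/q$ and $(r-1)/(q-\tfrac12)$, and using $\ee|Y_{ni}^{(u)}|^{\alpha}\le u^{\alpha}\vv(|a_{ni}X_{i}|>u)+\ee[|a_{ni}X_{i}|^{\alpha}I(|a_{ni}X_{i}|\le u)]$ together with $|a_{ni}|\le C(i/n)^{\beta}n^{-q}$, every contribution to $J(\epsilon)$ becomes, after Fubini and the substitution $s=u/|a_{ni}|$, an integral of the form $\int_{0}^{\infty}s^{\gamma}\vv(|X|>s)\dif s$ weighted by suitable powers of $n$ and $i/n$. In each of the three regimes $p>r/q$, $p=r/q$, $p<r/q$ the resulting double integral matches the one characterized by Lemma~\ref{lem3}, and hence is finite under~(\ref{3.1}). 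The side condition $\sum_{i}|\ee Y_{ni}^{(u)}|\le u$ is immediate for $q>1$ from $\sum_{i}|a_{ni}|\le Cn^{1-q}$, while for $\tfrac12<q\le1$ one uses the hypothesis $\ee X=-\ee(-X)=0$ to write
\[
|\ee Y_{ni}^{(u)}|=|\ee Y_{ni}^{(u)}-\ee(a_{ni}X_{i})|\le \ee\bigl[(|a_{ni}X_{i}|-u)^{+}\bigr],
\]
whose sum is dominated by the very integrals already controlled by Lemma~\ref{lem3}.

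\textbf{Necessity.} Writing $S_{k}^{(n)}=\sum_{i=1}^{k}a_{ni}X_{i}$, the identity $a_{nj}X_{j}=S_{j}^{(n)}-S_{j-1}^{(n)}$ gives $\max_{j}|a_{nj}X_{j}|\le 2\max_{k}|S_{k}^{(n)}|$, so $\vv(\max_{j}|a_{nj}X_{j}|>2u)\le\vv(\max_{k}|S_{k}^{(n)}|>u)$. Finiteness of $J(\epsilon)$ forces this capacity below $\tfrac12$ for $(n,u)$ outside a set of negligible $n^{r-2}u^{p-1}\dif u$-weight, so Lemma~\ref{lem2} applied to the independent random variables $\{a_{nj}X_{j}\}_{j=1}^{n}$ yields
\[
\sum_{j=1}^{n}\vv(|a_{nj}X_{j}|>2u)\le 16\,\vv(\max_{k}|S_{k}^{(n)}|>u)
\]
on the same range. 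Combining with the upper bound $|a_{nj}|\le C(j/n)^{\beta}n^{-q}$, which gives $\vv(|a_{nj}X_{j}|>2u)\ge\vv(|X|>C'un^{q}(n/j)^{\beta})$, one obtains
\[
\sum_{n=1}^{\infty}n^{r-2}\int_{\epsilon^{1/p}}^{\infty}u^{p-1}\sum_{j=1}^{n}\vv\bigl(|X|>C'un^{q}(n/j)^{\beta}\bigr)\dif u<\infty.
\]
Approximating the double sum via $y=n/j$ (so $\sum_{j=1}^{n}f(n/j)\approx n\int_{1}^{n}f(y)y^{-2}\dif y$) and then setting $x=u^{p}$ identifies the left-hand side, up to multiplicative constants, with $\int_{1}^{\infty}\dif y\int_{1}^{\infty}y^{r-1}\vv(|X|>x^{1/p}y^{q})\dif x$; Lemma~\ref{lem3} then delivers~(\ref{3.1}).

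\textbf{Main obstacle.} The most delicate point is the sufficiency in the critical regime $p=r/q$: the logarithmic factor $\ln|X|$ in~(\ref{3.1}) must be produced exactly, and this is traced back to an integral of the form $\int_{1}^{s^{1/q}}t^{-1}\dif t\approx\ln s$ exactly as in the proof of Lemma~\ref{lem3}. A secondary subtlety, specific to the sublinear setting, is the control of the non-linear centering $\sum_{i}|\ee Y_{ni}^{(u)}|$ when $\tfrac12<q\le1$; here the assumption $\ee X=-\ee(-X)=0$ is what allows the centering to be absorbed into a tail moment of precisely the type already handled by the Rosenthal estimate.
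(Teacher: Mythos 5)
Your reduction of (\ref{3.2}) to the integral $J(\epsilon)$ is correct, and your necessity argument follows essentially the paper's route (the bound $\max_j|a_{nj}X_j|\le 2\max_k|\sum_{i\le k}a_{ni}X_i|$, Lemma \ref{lem2}, then the change of variables of Lemma \ref{lem3}); there you should justify more carefully that the capacity stays below a fixed constant uniformly — the paper does this by first deducing $\vv(\max_k|a_{nk}X_k|>\epsilon)\to 0$ from the convergent series, as in (\ref{3.18})--(\ref{3.20}).

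The genuine gap is in the sufficiency direction, and it comes from the factor $\log^M n$ in Lemma \ref{lem1}. With your single truncation $Y_{ni}^{(u)}=(-u)\vee(a_{ni}X_i)\wedge u$, the Markov--Rosenthal step leaves you with, among other terms,
\begin{equation*}
\sum_{n=1}^{\infty}n^{r-2}\log^M n\int_{1}^{\infty}u^{p-1}\,u^{-M}\sum_{i=1}^{n}u^{M}\vv\bigl(|a_{ni}X_i|>u\bigr)\dif u
=\sum_{n=1}^{\infty}n^{r-2}\log^M n\int_{1}^{\infty}u^{p-1}\sum_{i=1}^{n}\vv\bigl(|a_{ni}X_i|>u\bigr)\dif u,
\end{equation*}
and by the computation in (\ref{3.9}) this is comparable to $\int_{1}^{\infty}\dif x\int_{1}^{\infty}s^{r/q-1}\log^{M}(s)\,\vv(|X|>cx^{1/p}s)\dif s$. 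For $p=r/q$ that quantity is finite iff $C_{\vv}(|X|^{r/q}\ln^{M+1}|X|)<\infty$, and for $p<r/q$ iff $C_{\vv}(|X|^{r/q}\ln^{M}|X|)<\infty$; since Lemma \ref{lem1} forces $M\ge 2$, these are strictly stronger than (\ref{3.1}), so your chain of estimates does not close in the critical and subcritical regimes. (In the classical/negatively-dependent setting of Guo and Shan one has a maximal Rosenthal inequality without the logarithm, which is exactly why the single-level truncation works there; under sublinear expectations the M\'{o}ricz bisection behind Lemma \ref{lem1} makes the $\log^M n$ unavoidable.) The paper's proof is built to dodge this: it truncates at the much lower level $x^{\alpha}n^{-\delta}$ with $\alpha<1/p$, so that the Rosenthal term acquires the factor $n^{-\delta(M-r/q)}$ which swallows $\log^M n$ (see (\ref{3.15})--(\ref{3.17})), and it disposes of the two intermediate ranges $x^{\alpha}n^{-\delta}<\pm a_{ni}X_i<x^{1/p}/(4K)$ not by moment inequalities but by the combinatorial ``at least $K$ exceedances'' bound (\ref{3.10}), which uses independence to produce the $K$-th power $[\sum_j\vv(a_{nj}X>x^{\alpha}n^{-\delta})]^K$ and hence arbitrarily fast decay. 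That three-tier splitting (small/intermediate/large) is the missing idea in your proposal; without it, or some substitute for a log-free maximal inequality, the cases $p\le r/q$ cannot be obtained from (\ref{3.1}) alone.
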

\begin{rmk}
If $(\Omega,\HH,\ee)$ is classic probability space, then Theorem \ref{thm1} recovers Theorem 7 in Guo and Shan \cite{Guo2020} in case that $\{X_n,n\ge 1\}$ is a sequence of independent random variables, identically distributed as $X$. As pointed in  Hossein and Nezakati \cite{Hosseni2019}, why we need $p$-th moment convergence under sublinear expectations, the complete moment covergence is a more general expression than complete convergence in theory and practice, the interested reader also could refer to the first study in complete moment convergence by Chow \cite{Chow1988}.
\end{rmk}
\begin{rmk}\label{remark01}Under the same conditions in Theorem \ref{thm1} , if (\ref{3.2}) holds, by the same proof of Remark 3.2 of Hossein and Nezakati \cite{Hosseni2019}, we see that for all $\epsilon>0$
\begin{equation}\label{3.201}
\sum_{n=1}^{\infty}n^{r-2}\vv\left(\max_{1\le k\le n}\left|\sum_{i=1}^{k}a_{ni}X_{i}\right|^p>\epsilon\right)<\infty.
\end{equation}
Moreover if $\vv$ is continuous, from (\ref{3.201}) follows that
\begin{equation}\label{3.202}
\sum_{i=1}^{n}a_{ni}X_{i}\rightarrow 0 \mbox{  a.s. $\vv$, as $n\rightarrow \infty$.}
\end{equation}
In (\ref{3.202}), if $1/2<q\le1$, $\beta=0$, we conclude that $\sum_{i=1}^{n}X_{i}/n^q\rightarrow 0 \mbox{  a.s. $\vv$, as $n\rightarrow \infty$,}$ which is similar to Theorem 1 of Zhang and Lin \cite{Zhang2018}.
\end{rmk}
\begin{thm}\label{thm2} Let $\{X_n,n\ge 1\}$ be a sequence of independent random variables, identically distributed as $X$ under sublinear expectation space  $(\Omega,\HH,\ee)$. Assume that $r>1$, $q>\frac12$, $\beta=-q/r<0$, and suppose that $\ee X=-\ee(-X)=0$ for $\frac12<q\le 1$. Suppose that $\{a_{ni}\approx (i/n)^{\beta}(1/n^q), 1\le i\le n, n\ge 1\}$ is a triangular array of real numbers. Then (\ref{3.2}) equivalent to
\begin{eqnarray}\label{3.3}
\begin{cases} C_{\vv}\left(|X|^{p}\right)<\infty,& \text{
for $p>r/q$,}\\
C_{\vv}\left(|X|^{r/q}\ln^2|X|\right)<\infty, & \text{ for $p=r/q$,}\\
C_{\vv}\left(|X|^{r/q}\ln|X|\right)<\infty, & \text{ for $p<r/q$.}
\end{cases}
\end{eqnarray}
\end{thm}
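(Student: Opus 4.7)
The plan is to mirror the architecture of the proof of Theorem \ref{thm1}: apply the Choquet identity, truncate, use Lemma \ref{lem1} for sufficiency and Lemma \ref{lem2} for necessity, and convert everything into the double integral of a single auxiliary lemma. The only substantive change from Theorem \ref{thm1} is that Lemma \ref{lem4} now plays the role of Lemma \ref{lem3}. This is forced by the critical choice $\beta = -q/r$: it makes $\sum_{i=1}^n |a_{ni}|^{r/q}\approx n^{-(r-1)}\log n$, so the $i$-summation contributes an extra $\log n$, which is precisely the $\ln(y)$ weight appearing in Lemma \ref{lem4}, and which in turn strengthens the moment condition from (\ref{3.1}) to (\ref{3.3}).

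To set things up, write $S_{n,k}=\sum_{i=1}^k a_{ni}X_i$ and use $C_\vv(W)=\int_0^\infty \vv(W>t)\dif t$ for $W\ge 0$ together with the substitution $x=\epsilon+t$, so that
\begin{equation*}
C_\vv\!\left(\left(\max_{1\le k\le n}|S_{n,k}|^p-\epsilon\right)^{\!+}\right) = \int_\epsilon^\infty \vv\!\left(\max_{1\le k\le n}|S_{n,k}|>x^{1/p}\right)\dif x.
\end{equation*}
Split this integral at $x=1$. The piece on $[\epsilon,1]$ is bounded by a constant times $\vv(\max_k|S_{n,k}|>\epsilon^{1/p})$, so after summing $\sum_{n}n^{r-2}$ it reduces to the complete-convergence quantity already controlled by the method of Theorem \ref{thm1} and Xu and Cheng \cite{Xu2021b}. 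All remaining work concerns the $[1,\infty)$-piece, which must be matched to $\int_1^\infty\dif y\int_1^\infty y^{r-1}\ln(y)\vv(|X|>x^{1/p}y^q)\dif x$ of Lemma \ref{lem4}.

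For the sufficiency direction $(\ref{3.3})\Rightarrow(\ref{3.2})$, truncate at level $\pm x^{1/p}$: set $Y_{ni}=(-x^{1/p})\vee(a_{ni}X_i)\wedge x^{1/p}$ and decompose
\begin{equation*}
\vv\!\left(\max_k|S_{n,k}|>x^{1/p}\right)\le \sum_{i=1}^n\vv(|a_{ni}X_i|>x^{1/p}) + \vv\!\left(\max_k\left|\sum_{i=1}^k (Y_{ni}-\ee Y_{ni})\right|>\tfrac12 x^{1/p}\right),
\end{equation*}
the assumption $\ee X=-\ee(-X)=0$ (when $\frac12<q\le 1$) together with (\ref{3.3}) being used to absorb the centering term $\sum_i\ee Y_{ni}$ into the $\frac12 x^{1/p}$ threshold. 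Apply Lemma \ref{lem1} with an even integer $M>\max\{2,p,r/q\}$ followed by Markov's inequality to control the centered maximum. After substituting $|a_{ni}|\approx n^{-q(1-1/r)}i^{-q/r}$ and interchanging summation and integration, each of the three resulting terms — the truncation tail, the $M$-th moment contribution, and the second-moment contribution from Lemma \ref{lem1} — is estimated, by identifying the continuous parameter $y$ with $n$, by a constant multiple of the integral $\int_1^\infty\dif y\int_1^\infty y^{r-1}\ln(y)\vv(|X|>x^{1/p}y^q)\dif x$, which is finite under (\ref{3.3}) by Lemma \ref{lem4}; this yields (\ref{3.2}). For the converse $(\ref{3.2})\Rightarrow(\ref{3.3})$, first observe $(\max_k|S_{n,k}|^p-\epsilon)^+\ge \epsilon I(\max_k|S_{n,k}|^p>2\epsilon)$, which forces $\vv(\max_i|a_{ni}X_i|>\eta)\to 0$ for every $\eta>0$; Lemma \ref{lem2} then gives $\sum_i\vv(|a_{ni}X_i|>x^{1/p})\ll \vv(\max_k|S_{n,k}|>\tfrac12 x^{1/p})$ for all large $n$ and $x\ge 1$, and summing/integrating with the same substitution reproduces the Lemma \ref{lem4} integral, which via Lemma \ref{lem4} yields (\ref{3.3}).

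The main obstacle is the bookkeeping at the critical exponent $\beta=-q/r$ in the sufficiency step. Unlike the non-critical case of Theorem \ref{thm1}, where $\sum_i|a_{ni}|^{r/q}\approx n^{-(r-1)}$ carries no logarithm, here $\sum_i|a_{ni}|^{r/q}\approx n^{-(r-1)}\log n$, and the truncation tail, the centering correction, and the second-moment term produced by Lemma \ref{lem1} each generate $\log n$ factors that have to be tracked and recombined into the $\ln(y)$-weight of Lemma \ref{lem4}. Choosing $M$ sufficiently large in Lemma \ref{lem1} and matching the resulting $\log^M n$ factor against the allowed $\ln(y)$-loss in Lemma \ref{lem4} — without being crushed by the $\log$-accumulation — is the technically delicate point, but once this is arranged the two directions follow symmetrically as in Theorem \ref{thm1}.
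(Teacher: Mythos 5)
Your high-level plan coincides with the paper's: Theorem \ref{thm2} is proved there by repeating the proof of Theorem \ref{thm1} with Lemma \ref{lem4} in place of Lemma \ref{lem3} and with the critical-exponent asymptotics $\sum_{i=1}^{n}a_{ni}^{r/q}\approx n^{-(r-1)}\ln n$ and $\int_{1}^{s^{1/q}}t^{-1}\dif t\approx\ln s$, exactly as you identify. Your necessity direction (Markov's inequality, then Lemma \ref{lem2}, then Lemma \ref{lem4}) is sound and matches the paper.

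However, your sufficiency direction has a genuine gap. You replace the paper's four-way decomposition by a single truncation $Y_{ni}=(-x^{1/p})\vee(a_{ni}X_i)\wedge x^{1/p}$ and then apply Lemma \ref{lem1} to the centered truncated sum. Lemma \ref{lem1} carries the factor $\log^M n$, and with this truncation the $M$-th moment term gives, using $\ee|Y_{ni}|^M\le x^{(M-r/q)/p}\,\ee|a_{ni}X_i|^{r/q}$ and $\sum_{i=1}^{n}|a_{ni}|^{r/q}\approx n^{-(r-1)}\ln n$,
\[
\sum_{n=1}^{\infty}n^{r-2}(\log n)^M\, n^{-(r-1)}\ln n\int_1^\infty x^{-r/(pq)}\dif x
=\sum_{n=1}^{\infty} n^{-1}(\log n)^{M+1}\int_1^\infty x^{-r/(pq)}\dif x,
\]
where the $n$-series diverges for every $M\ge 1$ (the exponent of $n$ is exactly the critical $-1$ here, unlike in Theorem \ref{thm1}) and the $x$-integral diverges whenever $p\ge r/q$. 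The ``allowed $\ln(y)$-loss'' in Lemma \ref{lem4} is exactly one logarithm, so it cannot absorb $(\log n)^M$ with $M$ large; no choice of $M$ closes your scheme. This is precisely why the paper's proof of Theorem \ref{thm1}, which Theorem \ref{thm2} inherits, introduces the intermediate truncation level $x^{\alpha}n^{-\delta}$ with $0<\alpha<1/p$ and $\delta>0$ small: Lemma \ref{lem1} is applied only to $X_{ni}^{(1)}$, which is bounded by $x^{\alpha}n^{-\delta}$, so its moments carry the spare factors $x^{-M(1/p-\alpha)}$ and $n^{-\delta(M-r/q)}$ that kill both the divergent $x$-integral and the $(\log n)^M$; the large values are handled separately by the union bound (for $X_{ni}^{(4)}$) and by the ``at least $K$ exceedances'' combinatorial bound (for $X_{ni}^{(2)},X_{ni}^{(3)}$), neither of which produces a $\log^M n$. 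Without this two-level truncation your sufficiency argument does not go through.
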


\begin{thm}\label{thm3} Let $\{X_n,n\ge 1\}$ be a sequence of independent random variables, identically distributed as $X$ under sublinear expectation space  $(\Omega,\HH,\ee)$. Assume that $r>1$, $q>\frac12$, $-q<\beta<-q/r<0$, and suppose that $\ee X=-\ee(-X)=0$ for $\frac12<q\le 1$. Suppose that $\{a_{ni}\approx (i/n)^{\beta}(1/n^q), 1\le i\le n, n\ge 1\}$ is a triangular array of real numbers. Then (\ref{3.2}) equivalent to
\begin{eqnarray}\label{3.4}
\begin{cases} C_{\vv}\left(|X|^{p}\right)<\infty,& \text{
for $p>(r-1)/(q+\beta)$,}\\
C_{\vv}\left(|X|^{(r-1)/(q+\beta)}\ln|X|\right)<\infty, & \text{ for $p=(r-1)/(q+\beta)$,}\\
C_{\vv}\left(|X|^{(r-1)/(q+\beta)}\right)<\infty, & \text{ for $p<(r-1)/(q+\beta)$.}
\end{cases}
\end{eqnarray}
\end{thm}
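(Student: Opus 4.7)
The plan is to follow the truncation-and-moment-inequality framework of Theorems \ref{thm1}--\ref{thm2}, adjusted for the fact that when $\beta<-q/r<0$ the weights are dominated by $|a_{n1}|\approx n^{-(q+\beta)}$. Write $S_n^k:=\sum_{i=1}^k a_{ni}X_i$ and note $|a_{ni}|\approx (i/n)^\beta n^{-q}$, so that $x^{1/p}/|a_{ni}|\approx cx^{1/p}i^{-\beta}n^{q+\beta}$ with $i^{-\beta}$ increasing in $i$ (since $-\beta>0$). A direct calibration---approximating $\sum_{i=1}^n$ by $\int_1^n dy$ and rescaling $y=nu$---shows that the tail double sum $\sum_n n^{r-2}\int_1^\infty \sum_{i=1}^n \vv(|X|>x^{1/p}/|a_{ni}|)\,dx$ has the same structure as the right-hand side of Lemma \ref{lem3}(ii), but with $(r,q)$ replaced by $(r-1,q+\beta)$. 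This is what produces the critical exponent $(r-1)/(q+\beta)$ and explains why Lemma \ref{lem3} (single log in the boundary case), rather than Lemma \ref{lem4}, is the right tool.

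For sufficiency, assume (\ref{3.4}). I would rewrite $C_\vv\bigl((\max_k|S_n^k|^p-\epsilon)^+\bigr)=\int_0^\infty \vv(\max_k|S_n^k|^p>\epsilon+t)\,dt$ and, for $x=\epsilon+t\ge 1$, truncate each $X_i$ at level $x^{1/p}/|a_{ni}|$ to produce $Y_{ni}^{(x)}$. A standard decomposition gives
\[
\vv(\max_k|S_n^k|>x^{1/p})\le \sum_{i=1}^n \vv(|X|>x^{1/p}/|a_{ni}|)+\vv\Bigl(\max_k\Bigl|\sum_{i\le k}\bigl(a_{ni}Y_{ni}^{(x)}-\ee[a_{ni}Y_{ni}^{(x)}]\bigr)\Bigr|>x^{1/p}/4\Bigr),
\]
provided the centering $\max_k|\sum_{i\le k}\ee[a_{ni}Y_{ni}^{(x)}]|\le x^{1/p}/4$, which follows from the hypothesis $\ee X=-\ee(-X)=0$ when $1/2<q\le 1$, or from the polynomial decay of the weights when $q>1$. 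The first summand feeds directly into the change of variables above and yields a convergent sum via Lemma \ref{lem3}. For the second, I would apply Lemma \ref{lem1} with $M$ chosen so that $M\beta<-1$ (so $\sum_i|a_{ni}|^M\approx n^{-M(q+\beta)}$, dominated by $i=1$) and $M>\max\{2,(r-1)/(q+\beta)\}$; Markov's inequality then reduces this term to another instance of the same change-of-variables calculation.

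For necessity, assume (\ref{3.2}). Remark \ref{remark01} gives $\sum_n n^{r-2}\vv(\max_k|S_n^k|^p>\epsilon)<\infty$ for every $\epsilon>0$. From $a_{nj}X_j=S_n^j-S_n^{j-1}$ we have $\max_j|a_{nj}X_j|\le 2\max_k|S_n^k|$, so Lemma \ref{lem2} applied to $\{a_{ni}X_i\}_{i=1}^n$ at level $x^{1/p}/2$ gives, for $x$ large enough that $\vv(\max_i|a_{ni}X_i|>x^{1/p}/2)\le 1/2$, the bound $\sum_{i=1}^n \vv(|a_{ni}X_i|>x^{1/p}/2)\le C\vv(\max_k|S_n^k|>x^{1/p}/4)$. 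Integrating over $x\ge x_0$, summing $\sum_n n^{r-2}$, and then inverting the change of variables of the first paragraph together with Lemma \ref{lem3} produces (\ref{3.4}).

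The main obstacle will be the careful bookkeeping in the change of variables that connects $\sum_n n^{r-2}\int_1^\infty dx\int_1^n \vv(|X|>cx^{1/p}y^{-\beta}n^{q+\beta})\,dy$ to the Choquet moments in (\ref{3.4}). Unlike in Theorem \ref{thm1}, where the $i$-sum is balanced over $\{1,\dots,n\}$ and the substitution $y=nu$ reproduces the scale $n^q$, here the integrand concentrates at $y=1$ and contributes an extra polynomial factor in $n$; tracking this factor is exactly what converts $(r,q)$ into $(r-1,q+\beta)$. Matching constants, separating the three subcases $p>(r-1)/(q+\beta)$, $p=(r-1)/(q+\beta)$, and $p<(r-1)/(q+\beta)$ against the corresponding subcases of Lemma \ref{lem3}, and verifying negligibility of the truncation centering in both the $1/2<q\le 1$ and $q>1$ regimes, will be the delicate technical work.
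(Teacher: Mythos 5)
Your overall skeleton is the right one, and you have correctly identified the two points that distinguish Theorem \ref{thm3} from Theorem \ref{thm1}: when $\beta<-q/r$ the sums $\sum_{i=1}^n|a_{ni}|^{\lambda}$ are dominated by the $i=1$ term (equivalently, $\int_1^{s^{1/q}}t^{\beta(r-1)/(q+\beta)}\dif t\approx C$), which converts the calibration $(r,q)$ of Lemma \ref{lem3} into $(r-1,q+\beta)$ and explains why Lemma \ref{lem3} rather than Lemma \ref{lem4} governs the boundary case. This is exactly how the paper proceeds: its proof of Theorem \ref{thm3} is a two-line reduction to the proof of Theorem \ref{thm1}, substituting $\sum_{i=1}^n a_{ni}^{r/q}\approx n^{-r(q+\beta)/q}$ and the constant bound for the $t$-integral in place of (\ref{3.8}). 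Your necessity argument (Remark \ref{remark01}, the bound $\max_j|a_{nj}X_j|\le 2\max_k|\sum_{i\le k}a_{ni}X_i|$, Lemma \ref{lem2}, then the change of variables) also matches (\ref{3.18})--(\ref{3.21}).

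The genuine gap is in the sufficiency direction, in the step you dismiss with ``Markov's inequality then reduces this term to another instance of the same change-of-variables calculation.'' With your single truncation at level $x^{1/p}/|a_{ni}|$, the natural bound $\ee|a_{ni}Y_{ni}^{(x)}|^M\le x^{(M-\lambda)/p}|a_{ni}|^{\lambda}\ee|X|^{\lambda}$ with $\lambda=(r-1)/(q+\beta)$ gives
\[
\sum_{n=1}^{\infty}n^{r-2}(\ln n)^M\int_{1}^{\infty}x^{-M/p}\sum_{i=1}^{n}\ee|a_{ni}Y_{ni}^{(x)}|^M\dif x\ \ll\ \sum_{n=1}^{\infty}n^{r-2}(\ln n)^M n^{-(r-1)}\int_{1}^{\infty}x^{-\lambda/p}\dif x\ =\ \infty,
\]
since $\sum_i|a_{ni}|^{\lambda}\approx n^{-(r-1)}$ here and $\sum_n n^{-1}(\ln n)^M$ diverges. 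This is precisely why the paper does not use a two-level truncation: it keeps the four-part decomposition of Theorem \ref{thm1}, with the intermediate level $x^{\alpha}n^{-\delta}$ ($0<\alpha<1/p$, $\delta$ small) supplying the extra factors $n^{-\delta(M-\lambda)}$ and $x^{-M(1/p-\alpha)}$ that make the analogues of (\ref{3.15})--(\ref{3.17}) summable, and with the combinatorial ``at least $K$ indices'' estimate (\ref{3.10}) handling the pieces $X_{ni}^{(2)},X_{ni}^{(3)}$ lying between the two truncation levels. Your scheme could in principle be rescued by replacing the crude moment bound with a Choquet-integral (integration-by-parts) estimate of $\ee[|a_{ni}X_i|^MI(|a_{ni}X_i|\le x^{1/p})]$, but that is a substantially different and more delicate argument under sublinear expectations, and as written your proposal does not close this step.
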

As in the proofs of Theorems \ref{thm1},\ref{thm2} and \ref{thm3}, we can get the following corollary.
\begin{cor}\label{cor} Let $\{X_n,n\ge 1\}$ be a sequence of independent random variables, identically distributed as $X$ under sublinear expectation space  $(\Omega,\HH,\ee)$. Assume that $r>1$, $q>\frac12$, $\beta>-q$, and suppose that $\ee X=-\ee(-X)=0$ for $\frac12<q\le 1$. Suppose that $\{a_{ni}\approx ((n-i)/n)^{\beta}(1/n^q), 0\le i\le n-1, n\ge 1\}$ is a triangular array of real numbers. Then
\begin{description}
 \item[\rm (i)] (\ref{3.1}) is equivalent to
 \begin{equation}\label{3.5}
\sum_{n=1}^{\infty}n^{r-2}\ee\left(\left(\max_{0\le k\le n-1}\left|\sum_{i=0}^{k}a_{ni}X_{i}\right|^p-\epsilon\right)^{+}\right)<\infty, \mbox{ $\beta>-q/r$.}
\end{equation}
\item[\rm (ii)] (\ref{3.3}) is equivalent to (\ref{3.5}) when $\beta=-q/r$.
\item[\rm (iii)] (\ref{3.4}) is equivalent to (\ref{3.5}) when $-q<\beta<-q/r$.
\end{description}
\end{cor}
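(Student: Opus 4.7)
The plan is to mimic the proofs of Theorems \ref{thm1}--\ref{thm3} verbatim, observing that the change of weights from $a_{ni}\approx(i/n)^{\beta}n^{-q}$ (for $1\le i\le n$) to $a_{ni}\approx((n-i)/n)^{\beta}n^{-q}$ (for $0\le i\le n-1$) only reorders the indices. In particular, for every $M>0$ the weight-moment sum is invariant:
$$
\sum_{i=0}^{n-1}|a_{ni}|^{M}\approx \sum_{i=0}^{n-1}\bigl((n-i)/n\bigr)^{\beta M}n^{-qM}=\sum_{j=1}^{n}(j/n)^{\beta M}n^{-qM},
$$
so every Rosenthal-type bound extracted from Lemma \ref{lem1} in the proofs of Theorems \ref{thm1}--\ref{thm3} carries over with identical right-hand side.

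For the direction $(3.1)/(3.3)/(3.4)\Rightarrow(3.5)$, I would truncate and center each $X_{i}$ exactly as in Theorems \ref{thm1}--\ref{thm3}, then apply Lemma \ref{lem1} to the Peng-independent sequence $\{a_{ni}X_{i}:0\le i\le n-1\}$ (which is independent after the trivial index shift $j=i+1$), to obtain
\begin{equation*}
\ee\max_{0\le k\le n-1}\Bigl|\sum_{i=0}^{k}a_{ni}X'_{i}\Bigr|^{M}\le C\log^{M}n\Bigl(\sum_{i=0}^{n-1}|a_{ni}|^{M}\ee|X'_{i}|^{M}+\bigl(\sum_{i=0}^{n-1}|a_{ni}|^{2}\ee|X'_{i}|^{2}\bigr)^{M/2}\Bigr),
\end{equation*}
where $X'_{i}$ denotes the truncated, centered version of $X_{i}$. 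The three $\beta$-regimes $\beta>-q/r$, $\beta=-q/r$, and $-q<\beta<-q/r$ match the three cases in (3.1), (3.3), and (3.4) respectively, and the convergence of $\sum_{n}n^{r-2}C_{\vv}((\cdot)^{+})$ follows by integrating the above inequality and bounding the resulting integrals via Lemmas \ref{lem3} and \ref{lem4}. For the reverse direction $(3.5)\Rightarrow(3.1)/(3.3)/(3.4)$, I would first pass from (3.5) to the tail bound of Remark \ref{remark01}, use Lemma \ref{lem2} to extract information about individual summands $a_{ni}X_{i}$, and finally invoke Lemmas \ref{lem3} or \ref{lem4} to identify the corresponding moment condition on $X$.

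The main obstacle I foresee is purely bookkeeping across the three $\beta$-regimes: matching the logarithmic factor of Lemma \ref{lem4} to the critical case $\beta=-q/r$, checking that the centering correction forced by $\ee X=-\ee(-X)=0$ in the range $1/2<q\le 1$ is negligible (this relies on $\sum_{i}|a_{ni}|\ll n^{1-q}$ uniformly in $n$), and verifying that in the regime $-q<\beta<-q/r$ the critical exponent correctly shifts from $p=r/q$ to $p=(r-1)/(q+\beta)$ as in Theorem \ref{thm3}. Apart from this reshuffling of exponents, the argument is essentially line-for-line the one used for Theorems \ref{thm1}--\ref{thm3}.
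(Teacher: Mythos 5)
Your proposal matches the paper's treatment: the paper gives no separate argument for Corollary \ref{cor}, stating only that it follows ``as in the proofs of Theorems \ref{thm1}, \ref{thm2} and \ref{thm3},'' which is exactly your observation that reversing the index order of the weights leaves every sum $\sum_i |a_{ni}|^M$ (and hence every Rosenthal-type and tail estimate) unchanged. The approach and the regime-by-regime bookkeeping you describe are the same as what the paper intends.
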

The following theorem is complete $p$-th moment convergence on Ces\`{a}ro summation of independent, identically distributed random variables
under$(\Omega,\HH,\ee)$.
\begin{thm}\label{thm4} Let $\{X_n,n\ge 1\}$ be a sequence of independent random variables, identically distributed as $X$ under sublinear expectation space  $(\Omega,\HH,\ee)$. Assume that $r>1$, $q>\frac12$, $0<\alpha\le 1$, $p>0$ and suppose that $\ee X=-\ee(-X)=0$ for $\frac12<q\le 1$. Suppose $A_n^{\alpha}=[(\alpha+1)(\alpha+2)\cdots(\alpha+n)]/n!$, $n=1,2,\ldots$ and $A_0^{\alpha}=1$. Then,
\begin{description}
 \item[\rm (i)](\ref{3.1}) equivalent to
\begin{equation}\label{3.6}
\sum_{n=1}^{\infty}n^{r-2}(A_n^{\alpha})^{-p}\ee\left(\left(\max_{0\le k\le n-1}\left|\sum_{i=0}^{k}A_{n-i}^{\alpha-1}X_{i}\right|^p-\epsilon(A_n^{\alpha})^{p}\right)^{+}\right)<\infty, \mbox{ $\forall\epsilon>0$,}
\end{equation}
when $1-1/r<\alpha\le 1$.
 \item[\rm (ii)] (\ref{3.3}) is equivalent to (\ref{3.6}) when $\alpha=1-1/r$.
  \item[\rm (iii)] (\ref{3.6}) is equivalent to
  \begin{eqnarray}\label{3.7}
\begin{cases} C_{\vv}\left(|X|^{p}\right)<\infty,& \text{
for $p>(r-1)/(q\alpha)$,}\\
C_{\vv}\left(|X|^{(r-1)/(q\alpha)}\ln|X|\right)<\infty, & \text{ for $p=(r-1)/(q\alpha)$,}\\
C_{\vv}\left(|X|^{(r-1)/(q\alpha)}\right)<\infty, & \text{ for $p<(r-1)/(q\alpha)$,}
\end{cases}
\end{eqnarray}
when $0<\alpha<1-1/r$.
\end{description}
\end{thm}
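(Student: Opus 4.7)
The plan is to deduce Theorem \ref{thm4} from Corollary \ref{cor} by recognizing the Cesàro-weighted sums as a special instance of the triangular-array-weighted sums considered there. First I would absorb the normalizing factor $(A_n^{\alpha})^{-p}$ into the truncation. Since $A_n^{\alpha}>0$ for $0<\alpha\le 1$ and $n\ge 0$,
\begin{align*}
(A_n^{\alpha})^{-p}\left(\max_{0\le k\le n-1}\left|\sum_{i=0}^{k}A_{n-i}^{\alpha-1}X_{i}\right|^p-\epsilon(A_n^{\alpha})^{p}\right)^{+}=\left(\max_{0\le k\le n-1}\left|\sum_{i=0}^{k}a_{ni}X_{i}\right|^p-\epsilon\right)^{+},
\end{align*}
where $a_{ni}:=A_{n-i}^{\alpha-1}/A_{n}^{\alpha}$ for $0\le i\le n-1$. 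Hence condition (\ref{3.6}) coincides verbatim with (\ref{3.5}) for these weights, and it suffices to verify that $\{a_{ni}\}$ satisfies the hypothesis of Corollary \ref{cor}.

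Second, I would establish the asymptotic $a_{ni}\approx ((n-i)/n)^{\alpha-1}(1/n)$ uniformly in $0\le i\le n-1$. Using the identity $A_m^{\gamma}=\Gamma(m+\gamma+1)/[\Gamma(\gamma+1)\Gamma(m+1)]$ together with the standard estimate $\Gamma(m+\gamma+1)/\Gamma(m+1)\approx m^{\gamma}$ for $m\ge 1$, and a direct check at the boundary value $m=1$ where $A_1^{\alpha-1}=\alpha$ (and $A_0^{\alpha-1}=1$), one gets $A_{n-i}^{\alpha-1}\approx(n-i)^{\alpha-1}$ and $A_{n}^{\alpha}\approx n^{\alpha}$, whence
\begin{align*}
a_{ni}\approx \frac{(n-i)^{\alpha-1}}{n^{\alpha}}=\left(\frac{n-i}{n}\right)^{\alpha-1}\cdot\frac{1}{n}.
\end{align*}
Thus $\{a_{ni}\}$ fits the template of Corollary \ref{cor} with parameters $q=1$ and $\beta=\alpha-1\in(-1,0]$. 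The centering hypothesis $\ee X=-\ee(-X)=0$ required in the corollary for $1/2<q\le 1$ is supplied by the assumption of Theorem \ref{thm4}.

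Finally, I would translate the three regimes of $\alpha$ into the three regimes of $\beta$ in Corollary \ref{cor}. The range $1-1/r<\alpha\le 1$ corresponds to $-1/r<\alpha-1\le 0$, i.e., $\beta>-q/r$, yielding part (i) of the corollary, which is the equivalence (\ref{3.1})$\Leftrightarrow$(\ref{3.6}) claimed in part (i) of Theorem \ref{thm4}. The boundary case $\alpha=1-1/r$ gives $\beta=-q/r$, so part (ii) of the corollary yields (\ref{3.3})$\Leftrightarrow$(\ref{3.6}). Finally $0<\alpha<1-1/r$ gives $-q<\beta<-q/r$, and part (iii) of the corollary yields (\ref{3.4})$\Leftrightarrow$(\ref{3.6}); in this range the critical exponent $(r-1)/(q+\beta)=(r-1)/\alpha$ appearing in (\ref{3.4}) agrees with $(r-1)/(q\alpha)$ in (\ref{3.7}) since $q=1$, and the corresponding moment conditions on $|X|$ match. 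The main obstacle is not probabilistic but combinatorial: one must justify the two-sided bound on $A_{n-i}^{\alpha-1}/A_n^{\alpha}$ uniformly up to the right endpoint $i=n-1$, where $n-i$ is small and the Stirling asymptotic degrades; this is handled by a separate direct computation using $A_1^{\alpha-1}=\alpha$ and $A_0^{\alpha-1}=1$. Once the weights are recast, no new probabilistic argument beyond Corollary \ref{cor} is required.
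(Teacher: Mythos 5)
Your reduction to Corollary \ref{cor} is the same strategy the paper uses, and your justification of the asymptotics $A_m^{\gamma}\approx m^{\gamma}$ (including the care at $m=1$) is fine and is more detailed than what the paper writes. But there is a genuine mismatch in how you identify the template parameters, and it matters. You take the weights literally from (\ref{3.6}), namely $a_{ni}=A_{n-i}^{\alpha-1}/A_n^{\alpha}\approx((n-i)/n)^{\alpha-1}(1/n)$, which forces the corollary's parameter $q$ to equal $1$ and $\beta=\alpha-1$. The corollary then returns the moment conditions (\ref{3.1}), (\ref{3.3}), (\ref{3.4}) \emph{with $q$ replaced by $1$}: e.g.\ in part (i) the threshold becomes $p\gtrless r$ rather than $p\gtrless r/q$, and in part (iii) the critical exponent becomes $(r-1)/\alpha$ rather than $(r-1)/(q\alpha)$. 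Your remark that these ``agree since $q=1$'' conflates the template parameter (which you chose to be $1$) with the hypothesis parameter $q>\tfrac12$ of Theorem \ref{thm4}, which is free and enters (\ref{3.1}), (\ref{3.3}) and (\ref{3.7}) explicitly; for $q\ne 1$ your conclusion is not the stated one. A second consequence of setting the template $q=1$ is that Corollary \ref{cor} then always demands $\ee X=-\ee(-X)=0$, whereas Theorem \ref{thm4} supplies this only when its own $q$ lies in $(\tfrac12,1]$; so for $q>1$ your argument uses a centering hypothesis that is not available.

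For comparison, the paper's (one-line) proof sets $a_{ni}=(A_{n-i}^{\alpha-1}/A_n^{\alpha})^{q}\approx((n-i)/n)^{q(\alpha-1)}(1/n^{q})$ and applies Corollary \ref{cor} with the theorem's own $q$ and $\beta=q(\alpha-1)$; then $\beta>-q/r\iff\alpha>1-1/r$ and $(r-1)/(q+\beta)=(r-1)/(q\alpha)$, which reproduces exactly the ranges and exponents in (\ref{3.1}), (\ref{3.3}) and (\ref{3.7}). The price is that the series produced by the corollary then involves $\sum_{i}(A_{n-i}^{\alpha-1}/A_n^{\alpha})^{q}X_i$, which is not literally the series displayed in (\ref{3.6}) unless $q=1$. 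In other words, the statement of Theorem \ref{thm4} and its proof are not mutually consistent for $q\ne 1$, and your proposal lands on the other horn of that inconsistency: it matches the displayed series but not the displayed moment conditions. To repair your write-up you should either restrict to $q=1$ (in which case your argument is complete and correct) or adopt the paper's $q$-th-power weights and say explicitly which series you are then proving convergent.
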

\begin{rmk}
If $(\Omega,\HH,\ee)$ is classic probability space, then Theorems \ref{thm1}, \ref{thm2}, \ref{thm3}, Corollary \ref{cor} recovers respectively Theorems 10, 11, 14, Corollary 13 in Guo and Shan \cite{Guo2020} in case that $\{X_n,n\ge 1\}$ is a sequence of independent random variables, identically distributed as $X$.
\end{rmk}
\section{Proofs of the Main Results}
\begin{proof}[Proof of Theorem \ref{thm1}] We first prove that (\ref{3.1}) implies (\ref{3.2}). Notice that
\begin{eqnarray*}
&&\sum_{n=1}^{\infty}n^{r-2}C_{\vv}\left(\left(\max_{1\le k\le n}\left|\sum_{i=1}^{k}a_{ni}X_{i}\right|^p-\epsilon\right)^{+}\right)\\
&&=\sum_{n=1}^{\infty}n^{r-2}\int_{0}^{\infty}\vv\left(\max_{1\le k\le n}\left|\sum_{i=1}^{k}a_{ni}X_{i}\right|^p>\epsilon+x\right)\dif x\\
&&= \sum_{n=1}^{\infty}n^{r-2}\int_{\epsilon}^{1}\vv\left(\max_{1\le k\le n}\left|\sum_{i=1}^{k}a_{ni}X_{i}\right|^p>x\right)\dif x+\sum_{n=1}^{\infty}n^{r-2}\int_{1}^{\infty}\vv\left(\max_{1\le k\le n}\left|\sum_{i=1}^{k}a_{ni}X_{i}\right|^p>x\right)\dif x\\
&&\le\sum_{n=1}^{\infty}n^{r-2}\vv\left(\max_{1\le k\le n}\left|\sum_{i=1}^{k}a_{ni}X_{i}\right|>\epsilon^{1/p}\right)+\sum_{n=1}^{\infty}n^{r-2}\int_{1}^{\infty}\vv\left(\max_{1\le k\le n}\left|\sum_{i=1}^{k}a_{ni}X_{i}\right|>x^{1/p}\right)\dif x\\
&&:=\Rmnum{1}+\Rmnum{2}.
\end{eqnarray*}
From Xu and Cheng \cite{Xu2021b}, and the fact that (3.1) implies $C_{\vv}(|X|^{r/q})<\infty$,  we see that $\Rmnum{1}<\infty$. We next establish $\Rmnum{2}<\infty$. Choose $0<\alpha<1/p$, $\delta>0$ small sufficiently and integer $K$ large enough. For every $1\le i\le n$, $n\ge 1$, we note the fact that $n$ is large sufficiently to guarantee $x^{\alpha}n^{-\delta}<\frac{x^{1/p}}{4K}$. Since the first finite terms of the series do not affect the convergence of the series, without loss of restictions, we could assume the definitions of $X_{ni}^{(j)}$ below are meaningful. Write
\begin{eqnarray*}
&&X_{ni}^{(1)}=-x^{\alpha}n^{-\delta}I(a_{ni}X_i<-x^{\alpha}n^{-\delta})+a_{ni}X_iI(|a_{ni}X_i|\le x^{\alpha}n^{-\delta})+x^{\alpha}n^{-\delta}I(a_{ni}X_i>x^{\alpha}n^{-\delta});\\
&&X_{ni}^{(2)}=(a_{ni}X_i-x^{\alpha}n^{-\delta})I\left(x^{\alpha}n^{-\delta}<a_{ni}X_i<\frac{x^{1/p}}{4K}\right);\\
&&X_{ni}^{(3)}=(a_{ni}X_i+x^{\alpha}n^{-\delta})I\left(-\frac{x^{1/p}}{4K}<a_{ni}X_i<-x^{\alpha}n^{-\delta}\right);\\
&&X_{ni}^{(4)}=(a_{ni}X_i+x^{\alpha}n^{-\delta})I\left(a_{ni}X_i\le -\frac{x^{1/p}}{4K}\right)+(a_{ni}X_i-x^{\alpha}n^{-\delta})I\left(a_{ni}X_i\ge\frac{x^{1/p}}{4K}\right),
\end{eqnarray*}
Observe that $\sum_{i=1}^{k}a_{ni}X_i=\sum_{i=1}^{k}X_{ni}^{(1)}+\sum_{i=1}^{k}X_{ni}^{(2)}+\sum_{i=1}^{k}X_{ni}^{(3)}+\sum_{i=1}^{k}X_{ni}^{(4)}$. Notice that
\[
\left(\max_{1\le k\le n}\left|\sum_{i=1}^{k}a_{ni}X_{i}\right|>x^{1/p}\right)\subset\bigcup_{j=1}^{4}\left(\max_{1\le k\le n}\left|\sum_{i=1}^{k}X_{ni}^{(j)}\right|>x^{1/p}/4\right).
\]
Consequently, to prove (\ref{3.2}), we only need to prove that
\[
J_j:=\sum_{n=1}^{\infty}n^{r-2}\int_{1}^{\infty}\vv\left(\max_{1\le k\le n}\left|\sum_{i=1}^{k}X_{ni}^{(j)}\right|>x^{1/p}/4\right)\dif x<\infty, \mbox{  $j=1,2,3,4$.}
\]
From the definition of $X_{ni}^{(4)}$, we deduce that
\[
\left(\max_{1\le k\le n}\left|\sum_{i=1}^{k}X_{ni}^{(4)}\right|>x^{1/p}/4\right)\subset\left(\max_{1\le i\le n}|a_{ni}X_{ni}|>\frac{x^{1/p}}{4K}\right).
\]
Observe that $\beta>-q/r$ implies $\beta(r-1)/(q+\beta)>-1$. Therefore,
\begin{equation}\label{3.8}
\int_{1}^{s^{1/q}}t^{\beta(r-1)/(q+\beta)}\dif t\approx s^{\frac{1}{q}+\frac{\beta(r-1)}{q(q+\beta)}}.
\end{equation}
From $a_{ni}\approx (i/n)^{\beta}(1/n^q)$, we see that
\begin{eqnarray}\label{3.9}
\nonumber J_4&\le&\sum_{n=1}^{\infty}n^{r-2}\sum_{i=1}^{n}\int_{1}^{\infty}\vv\left(|a_{ni}X_{ni}|>\frac{x^{1/p}}{4K}\right)\dif x\\
\nonumber &\approx&\sum_{n=1}^{\infty}n^{r-2}\sum_{i=1}^{n}\int_{1}^{\infty}\vv\left(|X|>\frac{x^{1/p}}{4CK}n^{q+\beta}i^{-\beta}\right)\dif x\\
\nonumber &\approx& \int_{1}^{\infty}\dif x\int_{1}^{\infty}u^{r-2}\dif u \int_{1}^{u}\vv\left(|X|>\frac{x^{1/p}}{4CK}u^{q+\beta}v^{-\beta}\right)\dif v\\
\nonumber && \mbox{  ( Setting $s=u^{q+\beta}v^{-\beta}, t=v$)}\\
\nonumber &\approx& \int_{1}^{\infty}\dif x\int_{1}^{\infty} \dif s \int_{1}^{s^{1/q}}s^{(r-1)/(q+\beta)-1}t^{\beta(r-1)/(q+\beta)}\vv\left(|X|>\frac{x^{1/p}}{4CK}s\right)\dif t\\
&\approx&\int_{1}^{\infty}\dif x\int_{1}^{\infty}s^{r/q-1}\vv\left(|X|>\frac{x^{1/p}}{4CK}s\right)\dif s.
\end{eqnarray}
Hence, from Lemma \ref{lem3} and (\ref{3.1}), we obtain $J_4<\infty$. From the definition of $X_{ni}^{(2)}$, we conclude that $X_{ni}^{(2)}\ge0$. From the subadditivity of capacity and Definition 2.5 in Chen \cite{Chen2016} follows that
\begin{eqnarray}\label{3.10}
\nonumber &&\vv\left(\max_{1\le k\le n}\left|\sum_{i=1}^{k}X_{ni}^{(2)}\right|>x^{1/p}/4\right)=\vv\left(\sum_{i=1}^{n}X_{ni}^{(2)}>x^{1/p}/4\right)\\
\nonumber&&\le\vv\left(\mbox{ there are at least $K$ indices $i\in [1,n]$ such that $a_{ni}X_{ni}>x^{\alpha}n^{-\delta}$}\right)\\
&&\nonumber\le\sum_{1\le i_1<i_2<\cdots<i_K\le n}\vv(a_{ni_{j}}X_{i_{j}}>x^{\alpha}n^{-\delta}, \mbox{  for all $1\le j\le K$})\\
&&\nonumber\quad\mbox{(In Definition 2.5 in Chen \cite{Chen2016}, we set $X=(a_{ni_1}X_{i_1},\ldots,a_{ni_{K-1}}X_{i_{K-1}}),Y=a_{ni_K}X_{i_K}$,)}\\
&&\nonumber\quad\mbox{($\varphi(X,Y_n)=I(a_{ni_1}X_{i_1}>x^{\alpha}n^{-\delta},\ldots,a_{ni_{K-1}}X_{i_{K-1}}>x^{\alpha}n^{-\delta}, a_{ni_K}X_{i_K}>x^{\alpha}n^{-\delta})$)}\\
&&\nonumber=\sum_{1\le i_1<i_2<\cdots<i_K\le n}\ee\left[I(a_{ni_1}X_{i_1}>x^{\alpha}n^{-\delta},\ldots,a_{ni_{K-1}}X_{i_{K-1}}>x^{\alpha}n^{-\delta})\right]\vv\left(a_{ni_K}X_{i_K}>x^{\alpha}n^{-\delta}\right)\\
&&\nonumber= \ldots\ldots\\
&&=\sum_{1\le i_1<i_2<\cdots<i_K\le n}\prod_{j=1}^{K}\vv(a_{ni_{j}}X_{i_{j}}>x^{\alpha}n^{-\delta})\le \left[\sum_{j=1}^{n}\vv(a_{nj}X>x^{\alpha}n^{-\delta})\right]^K.
\end{eqnarray}
From $\beta>-q/r$, we obtain $\sum_{i=1}^{n}a_{ni}^{r/q}\approx \sum_{i=1}^{n}n^{-r(q+\beta)/q}i^{\beta r/q}\approx n^{1-r}$.
Since (\ref{3.1}) implies $\ee|X|^{r/q}<\infty$, by Markov's inequality under sublinear expectations ( cf. (9) in Hu and Wu \cite{Hu2021}) and (\ref{3.10}), we conclude that
\begin{eqnarray}\label{3.11}
\nonumber J_2 &\le&\sum_{n=1}^{\infty}n^{r-2} \int_{1}^{\infty}\left[\sum_{j=1}^{n}\vv(a_{nj}X>x^{\alpha}n^{-\delta})\right]^K \dif x\\
\nonumber&\le&\sum_{n=1}^{\infty}n^{r-2} \int_{1}^{\infty}\left(\sum_{j=1}^{n}x^{-r\alpha/q}n^{r\delta /q}a_{nj}^{r/q}\ee|X|^{r/q}\right)^K\dif x\\
&\approx&\sum_{n=1}^{\infty}n^{r-2-K(r-1-r\delta/q)}\int_{1}^{\infty}x^{-rK\alpha/q}\dif x.
\end{eqnarray}
Notice that $r>1$, $\alpha>0$, we could choose $\delta$ small sufficiently and integer $K$ large enough such that $r-2-K(r-1-r\delta/q)<-1$ and $-rK\alpha/q<-1$. Hence, from (\ref{3.11}), we obtain $J_2<\infty$. Similarly, we can get $J_3<\infty$. In order to estimate $J_1$, we first prove that
\[
\sup_{x\ge 1}\frac{1}{x^{1/p}}\max_{1\le k\le n}\left|\sum_{i=1}^{k}\ee X_{ni}^{(1)}\right|\rightarrow 0 \mbox{    as $n\rightarrow\infty$.}
\]
Observe that (\ref{3.1}), Lemma 4.5 in Zhang \cite{Zhang2016a}, and H\"{o}lder's inequality under sublinear expectations imply that $\ee|X|^{r/q}<\infty$ and $\ee|X|^{1/q}<\infty$. When $q>1$, observe that $|X_{ni}^{(1)}|\le x^{\alpha}n^{-\delta}$ and $|X_{ni}^{(1)}|\le |a_{ni}X_{i}|$, by H\"{o}lder's inequality, we see that
\begin{eqnarray}\label{3.12}
\nonumber \max_{1\le k\le n}\left|\sum_{i=1}^{k}\ee X_{ni}^{(1)}\right|&\le&\sum_{i=1}^{n}\ee\left|X_{ni}^{(1)}\right|\le x^{\alpha(1-1/q)}n^{-\delta(1-1/q)}\sum_{i=1}^{n}\ee\left|a_{ni}X_{i}\right|^{1/q}\\
\nonumber&\ll&x^{\alpha(1-1/q)}n^{-\delta(1-1/q)}\sum_{i=1}^{n}\left|a_{ni}\right|^{1/q}\\
\nonumber&\le&x^{\alpha(1-1/q)}n^{-\delta(1-1/q)}n^{(r-1)/r}\left(\sum_{i=1}^{n}|a_{ni}|^{r/q}\right)^{1/r}\\
&\approx&x^{\alpha(1-1/q)}n^{-\delta(1-1/q)}.
\end{eqnarray}
Observing that $\alpha(1-1/q)<\alpha<1/p$, by (\ref{3.12}), for any $x\ge 1$, we get
\[
\sup_{x\ge 1}\frac{1}{x^{1/p}}\max_{1\le k\le n}\left|\sum_{i=1}^{k}\ee X_{ni}^{(1)}\right|\ll n^{-\delta(1-1/q)}\rightarrow 0 \mbox{    as $n\rightarrow\infty$.}
\]
When $1/2<q\le 1$, noticing that $\ee(X)=\ee(-X)=0$, by choosing $\delta$ small sufficiently such that $-\delta(1-r/q)+1-r<0$, we see that
\begin{eqnarray*}
 \max_{1\le k\le n}\left|\sum_{i=1}^{k}\ee X_{ni}^{(1)}\right|&\le&2\sum_{i=1}^{n}\ee\left|a_{ni}X_{i}\right|I\left(\left|a_{ni}X_{i}\right|>x^{\alpha}n^{-\delta}\right)\\
&\le&2x^{\alpha(1-r/q)}n^{-\delta(1-r/q)}\sum_{i=1}^{n}\ee\left|a_{ni}X_i\right|^{r/q}\\
&\le&2x^{\alpha(1-r/q)}n^{-\delta(1-r/q)}\sum_{i=1}^{n}\left|a_{ni}\right|^{r/q}\approx x^{\alpha(1-r/q)}n^{-\delta(1-r/q)+1-r}.
\end{eqnarray*}
Observing that $1-r/q<0$, we have
 \[
\sup_{x\ge 1}\frac{1}{x^{1/p}}\max_{1\le k\le n}\left|\sum_{i=1}^{k}\ee X_{ni}^{(1)}\right|\ll n^{-\delta(1-r/q)+1-r}\rightarrow 0 \mbox{    as $n\rightarrow\infty$.}
\]
Thus, to establish $J_1<\infty$, we only need to prove that
\begin{equation}\label{3.13}
J_1^{*}:=\sum_{n=1}^{\infty}n^{r-2} \int_{1}^{\infty}\vv\left(\max_{1\le k\le n}\left|\sum_{i=1}^{k} (X_{ni}^{(1)}-\ee X_{ni}^{(1)})\right|>\frac{x^{1/p}}{8}\right)\dif x<\infty.
\end{equation}
Observe that $\{X_{ni}^{(1)}, 1\le i\le n, n\ge 1\}$ is independent, identically distributed under $(\Omega,\HH,\ee)$. By Markov's inequality under sublinear expectations, $C_r$'s inequality and Lemma \ref{lem1}, we conclude that for a suitably large $M$, which will be determined later,
\begin{eqnarray}\label{3.14}
\nonumber &&\vv\left(\max_{1\le k\le n}\left|\sum_{i=1}^{k}( X_{ni}^{(1)}-\ee X_{ni}^{(1)})\right|>\frac{x^{1/p}}{8}\right)\\
&&\ll x^{-M/p}(\ln n)^M\left\{\sum_{i=1}^{n}\ee|X_{ni}^{(1)}|^M+\left[\sum_{i=1}^{n}\ee|X_{ni}^{(1)}|^2\right]^{M/2}\right\}.
\end{eqnarray}
Choosing large sufficiently $M$ such that $-M(1/p-\alpha)-r\alpha/q<-1$, $-1-(M-r/q)\delta<-1$, we have
\begin{eqnarray}\label{3.15}
\nonumber &&\sum_{n=1}^{\infty}n^{r-2}(\ln n)^M\sum_{i=1}^{n}\int_{1}^{\infty} x^{-M/p}\ee|X_{ni}^{(1)}|^M\dif x\\
\nonumber&&\ll \sum_{n=1}^{\infty}n^{r-2}n^{-\delta (M-r/q)}(\ln n)^M\sum_{i=1}^{n}|a_{ni}|^{r/q}\int_{1}^{\infty}x^{-M(1/p-\alpha)-r\alpha/q}\dif x\\
&&\ll \sum_{n=1}^{\infty}n^{-1-\delta (M-r/q)}(\ln n)^M<\infty.
\end{eqnarray}
When $r/q\ge 2$, (\ref{3.1}) implies that $\ee|X|^2<\infty$. Observing that $q>1/2$, we could choose $M$ large enough such that $-M/p<-1$, $r-2-(2q-1)M/2<-1$. Then, by $\beta>-q/r\ge -\frac12$,
\begin{eqnarray}\label{3.16}
\nonumber &&\sum_{n=1}^{\infty}n^{r-2}(\ln n)^M\int_{1}^{\infty} x^{-M/p}\left[\sum_{i=1}^{n}\ee|X_{ni}^{(1)}|^2\right]^{M/2}\dif x\\
\nonumber&&\ll \sum_{n=1}^{\infty}n^{r-2}(\ln n)^M\left(\sum_{i=1}^{n}a_{ni}^{2}\right)^{M/2}\int_{1}^{\infty} x^{-M/p}\dif x\\
\nonumber&&\ll \sum_{n=1}^{\infty}n^{r-2}(\ln n)^M\left(\sum_{i=1}^{n}i^{2\beta}n^{-2(q+\beta)}\right)^{M/2}\\
&&\ll\sum_{n=1}^{\infty}n^{r-2-(2q-1)M/2}(\ln n)^M<\infty.
\end{eqnarray}
When $r/q<2$, choosing $M$ large enough such that $-M[1/p-\alpha+\alpha r/(2q)]<-1$, $r-2-[\delta(2-r/q)+r-1]M/2<-1$, we have
\begin{eqnarray}\label{3.17}
\nonumber &&\sum_{n=1}^{\infty}n^{r-2}(\ln n)^M\int_{1}^{\infty} x^{-M/p}\left[\sum_{i=1}^{n}\ee|X_{ni}^{(1)}|^2\right]^{M/2}\dif x\\
\nonumber&&\ll \sum_{n=1}^{\infty}n^{r-2}n^{-\delta(2-r/q)M/2}(\ln n)^M\left(\sum_{i=1}^{n}a_{ni}^{r/q}\right)^{M/2}\int_{1}^{\infty} x^{-M[1/p-\alpha+r\alpha/(2q)]}\dif x\\
&&\ll\sum_{n=1}^{\infty}n^{r-2-[\delta(2-r/q)+r-1]M/2}(\ln n)^M<\infty.
\end{eqnarray}
Consequently, by (\ref{3.14}), (\ref{3.15}), (\ref{3.16}) and (\ref{3.17}), we get $J_1^{*}<\infty$.
Now, we prove that (\ref{3.2}) implies (\ref{3.1}). By Markov's inequality under sublinear expectations (see (8) of Wu \cite{Wu2020}), (\ref{3.2}), and Lemma 4.5 (\rmnum 3) in Zhang \cite{Zhang2016a},
\begin{eqnarray}\label{3.18}
\nonumber &&\quad\sum_{n=1}^{\infty}n^{r-2}\vv\left(\max_{1\le k\le n}\left|\sum_{i=1}^{k}a_{ni}X_i\right|>\epsilon\right)\\
\nonumber &&=\sum_{n=1}^{\infty}n^{r-2}\ee\left(1\cdot I\left(\max_{1\le k\le n}\left|\sum_{i=1}^{k}a_{ni}X_i\right|>\epsilon\right)\right)\\
\nonumber &&\le \sum_{n=1}^{\infty}n^{r-2}\ee\left(\left(\left(\max_{1\le k\le n}\left|\sum_{i=1}^{k}a_{ni}X_i\right|^p-(\epsilon/2)^p\right)^{+}/(\epsilon/2)^p\right)I\left(\max_{1\le k\le n}\left|\sum_{i=1}^{k}a_{ni}X_i\right|>\epsilon\right)\right)\\
\nonumber &&\le \sum_{n=1}^{\infty}n^{r-2}\ee\left(\left(\left(\max_{1\le k\le n}\left|\sum_{i=1}^{k}a_{ni}X_i\right|^p-(\epsilon/2)^p\right)^{+}/(\epsilon/2)^p\right)\right)\\
&&\le\sum_{n=1}^{\infty}n^{r-2}\ee\left(\max_{1\le k\le n}\left|\sum_{i=1}^{k}a_{ni}X_i\right|^p-(\epsilon/2)^p\right)^{+}/(\epsilon/2)^p<\infty.
\end{eqnarray}
Since
\[
\max_{1\le k\le n}\left|a_{nk}X_k\right|\le 2\max_{1\le k\le n}\left|\sum_{i=1}^{k}a_{ni}X_i\right|,
\]
by (\ref{3.18}) and similar proofs of (3.17) in Guo \cite{Guo2012}, we conclude that
\begin{eqnarray}\label{3.19}
\vv\left(\max_{1\le k\le n}\left|a_{nk}X_k\right|>\epsilon\right)\rightarrow 0  \mbox{   as $n\rightarrow \infty$.}
\end{eqnarray}
Therefore, by (\ref{3.19}) and Lemma \ref{lem2}, we obtain
\begin{eqnarray}\label{3.20}
\sum_{i=1}^{n}\vv\left(\left|a_{ni}X_i\right|>\epsilon\right)\ll \vv\left(\max_{1\le k\le n}\left|a_{nk}X_k\right|>\epsilon\right) \mbox{   for all $\epsilon>0$.}
\end{eqnarray}
Now, combining (\ref{3.20}) with (\ref{3.2}) gives
\begin{eqnarray}\label{3.21}
\sum_{n=1}^{\infty}n^{r-2}\int_{\epsilon}^{\infty}\sum_{i=1}^{n}\vv\left(\left|a_{ni}X_i\right|>x^{1/p}\right)\dif x<\infty.
\end{eqnarray}
From the proof of (\ref{3.9}), we conclude that (\ref{3.21}) is equivalent to (\ref{3.1}).
\end{proof}
\begin{proof}[Proof of Theorem \ref{thm2}] As in the proof of Theorem \ref{thm1}, with Lemma \ref{lem4} in place of Lemma \ref{lem3}, and using
\[
\sum_{i=1}^{n}a_{ni}^{r/q}\approx \sum_{i=1}^{n}n^{-r(q+\beta)/q}i^{\beta r/q}\approx n^{-r(q+\beta)/q}\ln n,\\
\int_{1}^{s^{1/q}}t^{\beta(r-1)/(q+\beta)}\dif t=\int_{1}^{s^{1/q}}t^{-1}\dif t\approx \ln(s)
\]
when $\beta=-q/r$, we could prove Theorem \ref{thm2}. Hence the proof is omitted.
\end{proof}
\begin{proof}[Proof of Theorem \ref{thm3}] As in the proof of Theorem \ref{thm1}, using
\[
\sum_{i=1}^{n}a_{ni}^{r/q}\approx \sum_{i=1}^{n}n^{-r(q+\beta)/q}i^{\beta r/q}\approx n^{-r(q+\beta)/q} \mbox{ if $\beta<-q/r$}
\]
and with
\[
\int_{1}^{s^{1/q}}t^{\beta(r-1)/(q+\beta)}\dif t\approx C \mbox{ if $\beta<-q/r$}
\]
in place of (\ref{3.8}), we could prove Theorem \ref{thm3}. Thus the proof is omitted.
\end{proof}
\begin{proof}[Proof of Theorem \ref{thm4}] Setting $a_{ni}=(A_{n-i}^{\alpha-1}/A_n^{\alpha})^q$, $0\le i\le n$, $n\ge 1$, observe that $a_{ni}\approx (n-i)^{q(\alpha-1)}n^{-q\alpha}$, $0\le i<n$, $n\ge 1$, $a_{nn}\approx n^{-q\alpha}$. As in the proof of Theorem 14 in Guo and Shan \cite{Guo2020}, letting $\beta=q(\alpha-1)$ in Corollary \ref{cor}, we finish the proof of Theorem \ref{thm4}.
\end{proof}

{\bf Acknowledgements}

The authors are very grateful to the referees and reviewers, whose careful reading and beneficial comments helped to correct many misprints, mistakes and improve the quality of the paper.

{\bf Funding}

This research was supported by Doctoral Scientific Research Starting Foundation of Jingdezhen Ceramic University ( Nos.102/01003002031 ), Scientific Program of Department of Education of Jiangxi Province of China (Nos. GJJ190732, GJJ180737), Natural Science Foundation Program of Jiangxi Province 20202BABL211005, and National Natural Science Foundation of China (Nos. 61662037).

{\bf Availability of data and materials}

No data were used to support this study.

{\bf Competing interests}

The authors declare that they have no competing interests.

{\bf Authors¡¯ contributions}

All authors contributed equally and read and approved the final manuscript.

\end{document}